\documentclass[12pt]{article}
\usepackage[utf8]{inputenc}
\usepackage{amsmath}
\usepackage{amsthm}
\usepackage{amssymb}
\usepackage[mathscr]{euscript}
\usepackage{enumerate}
\usepackage{graphicx}
\usepackage{fullpage}
\usepackage{tikz}
\usepackage{color}
\usepackage{multicol}
\usepackage{hyperref}

\newtheorem{thm}{Theorem}[section]
\newtheorem{lem}[thm]{Lemma}
\newtheorem{prop}[thm]{Proposition}
\newtheorem{cor}[thm]{Corollary}

\theoremstyle{definition}

\newcommand*\C{\mathcal{C}}
\newcommand*\I{\mathcal{I}}
\newcommand*\nestedset{\mathcal{N}}
\newcommand*\shiftedset{\mathcal{S}}
\newcommand*\KG{\operatorname{KG}}

\newcommand*\sym{\operatorname{Sym}}
\newcommand*\aut{\operatorname{Aut}}
\newcommand*\lk{\operatorname{lk}}
\newcommand*\st{\operatorname{st}}
\newcommand*\fc{\chi^*}
\newcommand{\harm}[1]{H_{\lfloor #1 \rfloor}}
\newcommand{\strata}{\mathfrak{s}}

\title{Endomorphisms of Total Kneser Graphs}
\author{Esmaeil Alhaj Mostafa Abdan, Jacob Russell, Sophia Ulvestad}

\begin{document}

\maketitle
\begin{abstract}
    Inspired by work of Gaster, Greene, and Vlamis on the chromatic number of curve graphs, we prove that every endomorphism of the total Kneser graph is an automorphism. We also show that the automorphism group of the total Kneser graph is the product of a pair of permutation groups.
\end{abstract}

\section{Introduction}
Gaster, Greene, and Vlamis introduced the \emph{total Kneser graph} to study the chromatic number of the curve graph of surfaces \cite{GGV_Coloring_curves}. The total Kneser graph, $\KG(n)$, has a vertex for every partition of $\{1,\dots, n\}$ into two non-empty sets $\{A, A^c\}$ with an edge between $\{A,A^c\}$ and $\{B,B^c\}$ if either $A$ and $B$ are disjoint or one nests into the other.
The total Kneser graph  has a \emph{stratification} by  classical Kneser graphs; the subgraph $\KG(n,k)$ spanned by vertices where one of the sets in the partition has size $k$ is isomorphic to the Kneser graph when $n > 2k$. 

The classical Kneser graph is usually defined as the graph with a  vertex for each subset of $\{1,\dots,n\}$ of size $k$ with edges for disjointness. The Kneser graph is a central object in combinatorics, especially in the study of chromatic numbers (and their variants) of graphs; see \cite[Chapter 7]{GR_Alg_Graph_Theory_Book}. A basic fact about Kneser graphs is that they are \emph{cores}, that is,  the only  endomorphisms are automorphisms. Gaster, Greene, and Vlamis speculated that the total Kneser graph should also be a core when $n \geq 5$.
Our main theorem verifies this conjecture.

\begin{thm}\label{into:core}
    $\KG(n)$ is a core if and only if $n \neq 4$.
\end{thm}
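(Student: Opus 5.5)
The plan is to first dispose of small $n$ and then prove that for $n\ge 5$ every endomorphism $f$ is injective on vertices. Since $\KG(n)$ is finite, an injective endomorphism permutes the vertices and maps edges to edges. It therefore permutes the finite edge set, so non-edges go to non-edges and $f$ is an automorphism.

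Two vertices $\{A,A^c\}$ and $\{B,B^c\}$ are adjacent exactly when one of $A\cap B$, $A\cap B^c$, $A^c\cap B$, $A^c\cap B^c$ is empty, i.e.\ when the splits are \emph{compatible} in the phylogenetic sense. Hence cliques are compatible split systems, and maximum cliques are the split systems of trivalent trees with leaves $\{1,\dots,n\}$, of size $2n-3$. Every stratum-one vertex $\{\{i\},\{i\}^c\}$ is adjacent to everything.
\begin{itemize}
\item For $n\le 3$ all vertices are of this form, the graph is complete, and it is a core.
\item For $n=4$, $\KG(4)$ is $K_4$ joined to an independent set $\{12|34,\,13|24,\,14|23\}$. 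Collapsing the three independent vertices to one vertex gives a non-surjective endomorphism, so $\KG(4)$ is not a core.
\end{itemize}

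For $n\ge 5$ I would carry out the following steps.
\begin{enumerate}
\item Every vertex lies in a maximum clique, and $f$ restricted to a clique is injective. So $f$ sends each tree split system bijectively to a tree split system. Thus $f$ induces a map $T\mapsto f(T)$ on trivalent trees.
\item A $(2n-4)$-clique (a tree with one internal edge contracted, leaving a degree-$4$ vertex) extends to a maximum clique in exactly three ways, and the three added splits are pairwise incompatible. If two trees $T,T'$ differ by a nearest-neighbour interchange through splits $u\in T$, $v\in T'$, then $f(T)$ and $f(T')$ share the $(2n-4)$-clique $f(T\setminus u)$. Hence $f(u)$ and $f(v)$ are among the three completions of that clique.
\item To show $f(u)\ne f(v)$ for such a pair, use the third completion $w$, which together with $u,v$ forms the triangle-free triple at that degree-$4$ vertex. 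I would try to exhibit a vertex $x$ with two properties: $x$ is compatible with $u$ and with $T\setminus u$, but incompatible with $v$; and $f$ must respect that incompatibility because of a second clique through $x$. For $n\ge 5$ there is room to choose $x$ among splits refining the four branches at the degree-$4$ vertex. The point is that $f(T\setminus u)\cup\{f(u)\}$ and $f(T'\setminus v)\cup\{f(v)\}$ then cannot both coincide with one maximum clique.
\item Pass from NNI-adjacent pairs to arbitrary incompatible pairs $u,v$, and rule out $f(u)=f(v)$ there too. For this I would induct on the distance between $u$ and $v$ in the NNI graph, combined with a counting check stratum by stratum. The stratum sizes $\binom nk$, and the fact that $f$ permutes the maximum cliques once it is injective on them, should let a non-injective $f$ be detected as a drop in the number of maximum cliques in the image.
\end{enumerate}

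The main obstacle is step 3: proving that $f$ cannot identify two incompatible splits. Bare clique arguments do not forbid it, since $\KG(4)$ is exactly a case where such an identification happens. The argument must genuinely use a fifth leaf to produce a witness vertex $x$ separating $u$ from $v$, so I would first work out $n=5$ completely by hand and then try to localise the general case to a $5$-leaf subconfiguration.
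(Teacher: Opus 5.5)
Your treatment of $n\le 4$ is correct. Your framework for $n\ge 5$ is also sound: adjacency is split compatibility, the clique number of $\KG(n)$ is $2n-3$, an endomorphism is injective on every clique, and an injective endomorphism of a finite graph is an automorphism. But the proof has a genuine gap exactly where you suspect it: Steps 3 and 4 contain no mechanism that prevents $f(u)=f(v)$ for non-adjacent $u,v$.

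The witness $x$ in Step 3 cannot exist as described. If $x$ is compatible with $u$ and with every split of $T\setminus u$, then $x$ is compatible with all of $T$, so $x\in T$ by maximality. Every split of $T\setminus u$ also lies in $T'$ and is therefore compatible with $v$, so this forces $x=u$. More fundamentally, a homomorphism never has to ``respect'' an incompatibility; it only transports adjacency. Any contradiction therefore has to come from adjacency combined with a numerical bound.

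Step 4 fails too. Knowing $f(u_0)\ne f(u_1)$ and $f(u_1)\ne f(u_2)$ says nothing about $f(u_0)$ versus $f(u_2)$, so induction along NNI paths cannot reach arbitrary incompatible pairs. The remark about counting maximum cliques in the image is not an argument.

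The missing idea completes your approach without needing the induced map on trees or NNI at all. If $f(u)=f(v)$, then every vertex of $\lk(u)\cup\lk(v)$ is sent into $\lk(f(u))$, whose cliques have size at most $2n-4$. So it suffices to find a clique of size $2n-3$ inside $\lk(u)\cup\lk(v)$ for every non-adjacent pair.

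To build it, write $u=A|A^c$ and $v=B|B^c$, and set $P=A\cap B$, $Q=A\cap B^c$, $R=A^c\cap B$, $S=A^c\cap B^c$; all four are nonempty. Since $n\ge 5$, some part has at least two elements, and the symmetries $A\leftrightarrow A^c$, $B\leftrightarrow B^c$ let you assume $|S|\ge 2$. Write $S=S_1\sqcup S_2$ and take a trivalent tree whose central vertex has branches $P$, $R\cup S_1$, $Q\cup S_2$, with $R,S_1,Q,S_2$ as clusters. Every split of this tree is adjacent to $u$, except $Q\cup S_2\,|\,P\cup R\cup S_1$, which is adjacent to $v$. For $n=4$ all four parts are singletons and no such clique exists, which matches your counterexample.

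This would give a genuinely different and shorter proof than the paper's. The paper instead shows that endomorphisms preserve every stratum $\KG(n,k)$, and then invokes the theorem that Kneser graphs are cores. It uses the fractional chromatic number of stars to stop strata from rising, a direct argument to stop them from falling, and a separate analysis of the top two strata when $n$ is even.
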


Gaster, Greene, and Vlamis were interested in the total Kneser graph because of a natural homomorphism from the curve graph of a punctured sphere. If $\Sigma_n$ is a sphere with $n$ points removed, then the \emph{curve graph} $\C(\Sigma_n)$ is the graph with a  vertex for each isotopy class of essential simple closed curves on $\Sigma_n$ with edges if two classes have disjoint representatives. The curve graph is an infinite graph that plays a pivotal role in the rich theory of surfaces and their mapping class groups; see for example \cite{MM_Curve_Graphs_1, BCM_Ending_Laminations_2, Ivanov_Automorphisms, Brock_Weil-Peterson, BBF_Projection_complexes}. 
Curve graphs provide a powerful tool for translating topological problems about surfaces into combinatorial problems about graphs.

If you label the punctures of $\Sigma_n$ with $1,\dots,n$, then each essential curve $\gamma$ cuts $\Sigma_n$ into two pieces and the labels of the punctures on each piece of $\Sigma_n \setminus \gamma$ are invariant under isotopies of $\gamma$. Thus, we get a surjective graph homomorphism $\C(\Sigma_n) \to \KG'(n) = \KG(n) \setminus \KG(n,1)$. From this point of view, Theorem \ref{into:core} is a purely combinatorial analogue of the fact that curve graphs are cores \cite{I_endomorphisms_of_curvegraphs_1,I_endomorphisms_of_curvegraphs_2}.

This homomorphism means the chromatic number of $\KG'(n)$ is an upper bound for the chromatic number of $\C(\Sigma_n)$. While Gaster, Greene, and Vlamis give good lower bounds on the chromatic number of $\C(\Sigma_n)$ the exact chromatic number is still unknown. It is therefore natural to  wonder if $\C(\Sigma_n)$ has a subgraph isomorphic to $\KG'(n)$. Gaster, Greene, and Vlamis prove the answer is ``no'' in the special case when $n=5$. Their proof uses that $\KG(5)$ is a core and is readily generalizable to higher $n$ now that we have Theorem \ref{into:core}.

\begin{cor}
    For $n \geq 5$, the curve graph of the $n$-punctured sphere does not contain a subgraph isomorphic to $\KG'(n)$.
\end{cor}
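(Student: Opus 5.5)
We argue by contradiction, following the strategy of Gaster, Greene, and Vlamis for $n=5$. Fix $n \geq 5$ and suppose $\C(\Sigma_n)$ contains a subgraph $H$ isomorphic to $\KG'(n)$. Let $\phi \colon \C(\Sigma_n) \to \KG'(n)$ be the surjective homomorphism that records the partition of punctures cut off by a curve. Composing an isomorphism $\KG'(n) \to H$ with the inclusion $H \hookrightarrow \C(\Sigma_n)$ and then with $\phi$ gives an endomorphism $\psi$ of $\KG'(n)$. The plan is to show that $\psi$ must be an automorphism, and then that this is incompatible with the geometry of curves.

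First we need core-type rigidity for $\KG'(n)$, not only for $\KG(n)$. Theorem \ref{into:core} is stated for $\KG(n)$, so there are two options. If the proof of Theorem \ref{into:core} works stratum by stratum using the Kneser strata $\KG(n,k)$, the same argument should apply to $\KG'(n)$, which is the union of the strata with $k \geq 2$. Alternatively, we can extend $\psi$ to an endomorphism of $\KG(n)$. A vertex $\{\{i\},\{i\}^c\}$ of $\KG(n,1)$ is adjacent to exactly the vertices $\{A,A^c\}$ with $i \in A$ or $i \notin A$ in the nested sense, that is, to every vertex. So $\KG(n,1)$ is a set of $n$ vertices, each adjacent to everything and to each other. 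The $2$-element vertices adjacent to all of $\KG(n,2)$'s neighbours of a given singleton could serve to determine an image for it. Checking that this extension is a homomorphism is the main technical obstacle, and I expect the paper instead proves the core statement for $\KG'(n)$ directly alongside Theorem \ref{into:core}. Granting this, $\psi$ is an automorphism. In particular, $\phi$ restricted to $H$ is injective, and no two adjacent vertices of $H$ map to the same partition.

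Next we derive the contradiction topologically. Take a $2$-set vertex $\{\{1,2\},\{1,2\}^c\}$ and a $2$-set vertex $\{\{2,3\},\{2,3\}^c\}$. In $\KG'(n)$ these are not adjacent, since $\{1,2\}$ and $\{2,3\}$ are neither disjoint nor nested (this uses $n \geq 5$, so that complements also fail). The vertex $\{\{1,2,3\},\cdot\}$ is adjacent to both. Now consider the corresponding curves in $H$. Because $\psi$ is an automorphism of $\KG'(n)$, after composing with $\psi^{-1}$ we may assume $\phi|_H$ is the identity on labels. Then $H$ contains, for every partition, a curve realizing it, and curves realizing compatible partitions are disjoint.

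The key step is to find a configuration of partitions that is pairwise compatible in $\KG'(n)$ but whose realizations by curves force an intersection, or vice versa. Following GGV, I would take the pants-decomposition-sized cliques. A maximal clique in $\KG'(n)$ corresponds to a maximal nested/disjoint family of sets, which has $n-3$ elements, the same as the size of a pants decomposition. Disjoint curves realizing such a clique form a pants decomposition, and this decomposition is determined up to the mapping class group by the tree of partitions. The plan is to pick two partitions $P,Q$ adjacent in $\KG'(n)$ whose curves are both forced, via their common clique-mates, into the same complementary pair of pants. One then gets that the curves for $P$ and $Q$ must coincide or intersect, contradicting injectivity or disjointness. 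Making this forcing argument precise for general $n$ is where I would follow the $n=5$ argument most closely, inducting by restricting to subsurfaces cut off by curves in $H$ with $5$ punctures.
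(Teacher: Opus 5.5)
Your first half is fine. Composing with $\Phi$ gives an endomorphism of $\KG'(n)$, and the paper does prove $\KG'(n)$ is a core. The extension you call the main technical obstacle is trivial: extend by the identity on $\KG(n,1)$, which is a homomorphism because $\KG(n)=\KG(n,1)\ast\KG'(n)$. That is exactly the paper's argument.

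The genuine gap is the topological contradiction. You never supply it, and the mechanism you sketch cannot supply it.
\begin{itemize}
\item Any clique of $\KG'(n)$ is a pairwise compatible split system, i.e.\ the internal edges of a tree with leaves labelled $1,\dots,n$. Embedding that tree in the sphere and taking boundaries of neighbourhoods realizes the whole clique by pairwise disjoint curves. Two adjacent splits whose curves bound a common pair of pants is perfectly consistent, so no contradiction can come from ``forcing'' inside a pants decomposition.
\item In the base case $n=5$ you want to reduce to, $\KG'(5)=\KG(5,2)$ is the Petersen graph. It is triangle-free, so adjacent vertices have no common clique-mates at all.
\item The obstruction is genuinely global. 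If you omit any one two-element split of $[5]$, the other nine can be realized with the required disjointness: draw $K_5$ minus an edge in the plane and take boundaries of thin neighbourhoods of the edges. So a valid argument must use all ten at once.
\end{itemize}
Your reduction from general $n$ to $n=5$ (e.g.\ by forgetting punctures) is sound. But it only relocates the problem to a base case for which you give no argument.

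The missing idea, which is the paper's, is to use only the curves $\gamma_{ij}=\iota(\{i,j\}|\{i,j\}^c)$, after normalizing $\Phi\circ\iota$ to the identity.
\begin{itemize}
\item Each $\gamma_{ij}$ bounds a disk $D_{ij}$ containing exactly the punctures $i$ and $j$.
\item If $\{i,j\}\cap\{k,l\}=\emptyset$, the two vertices are adjacent in $\KG'(n)$, so $\gamma_{ij}$ and $\gamma_{kl}$ are disjoint. Then $D_{ij}$ and $D_{kl}$ are disjoint too: they cannot be nested since they contain different punctures, and they cannot cover the sphere since $n\geq 5$.
\item Join $i$ to $j$ by an arc inside $D_{ij}$ for every pair. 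This gives a drawing of $K_n$ on the sphere in which independent edges never cross.
\item By the Hanani--Tutte theorem $K_n$ would then be planar, contradicting $n\geq 5$.
\end{itemize}
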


We also investigate the automorphism group of $\KG(n)$ and show it is naturally isomorphic to $\sym(n) \times \sym(n)$ where the first factor is acting on the subgraph $\KG(n,1)$ and the second factor is acting on $\KG'(n) = \KG(n) \setminus \KG(n,1)$. This mirrors the case of the classical Kneser graph where the automorphism group is known to be $\sym(n)$.

\begin{thm}\label{into:automorphism}
    For $n \geq 5$, the natural action of $\sym(n) \times \sym(n)$ on $\KG(n)$ gives an isomorphism $\sym(n) \times \sym(n) \cong \aut(\KG(n))$
\end{thm}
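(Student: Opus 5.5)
The first step is to record a structural feature of $\KG(n)$ that splits the problem in two. A singleton vertex $\{\{i\},\{i\}^c\}$ is adjacent to every other vertex. Indeed, for any other vertex $\{B,B^c\}$ the point $i$ lies in $B$ or in $B^c$, so $\{i\}$ nests into that side. Two singletons $\{i\},\{j\}$ are disjoint, so they are adjacent too. Hence $\KG(n,1)$ is a clique $K_n$ joined completely to $\KG'(n)=\KG(n)\setminus\KG(n,1)$.

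Next I would show that an automorphism preserves this decomposition, by showing the singleton vertices are exactly the dominating vertices. Take a vertex $\{A,A^c\}$ with $|A|,|A^c|\ge 2$. Choose $a\in A$ and $b\in A^c$, and put $B=\{a,b\}$. For $n\ge 5$ all four of $A\cap B$, $A\cap B^c$, $A^c\cap B$, $A^c\cap B^c$ are non-empty, so $\{B,B^c\}$ is not adjacent to $\{A,A^c\}$. Therefore $\aut(\KG(n))\cong \sym(K_n)\times\aut(\KG'(n)) = \sym(n)\times\aut(\KG'(n))$. It then suffices to prove that the natural map $\sym(n)\to\aut(\KG'(n))$ is an isomorphism for $n\ge 5$.

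Injectivity of $\sym(n)\to\aut(\KG'(n))$ is clear: a nontrivial $\sigma$ moves some pair $\{i,j\}$ to a different pair, and since $n\ge 5$ the vertex $\{\{i,j\},\{i,j\}^c\}$ is moved. For surjectivity I would argue as follows.

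\begin{enumerate}[(1)]
\item Show every automorphism $\varphi$ of $\KG'(n)$ preserves the strata $\KG(n,k)$, $2\le k\le n/2$. I would do this by computing the degree of a vertex $\{A,A^c\}$ with $|A|=k$. Its neighbours are the splits having one side strictly inside $A$ or strictly inside $A^c$, with both sides of size at least $2$. This gives a count of roughly $2^k+2^{n-k}$ minus lower-order corrections. I would check that this is strictly monotone in $\min(k,n-k)$, so the degree determines the stratum.
\item On the stratum $k=2$, two pairs are adjacent iff they are disjoint. Nesting is impossible between distinct pairs, and a pair cannot be nested in the complement of another when $n\ge5$. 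So $\KG(n,2)$ is the classical Kneser graph on $2$-subsets, whose automorphism group is $\sym(n)$ for $n\ge 5$. After composing $\varphi$ with a permutation, I may assume $\varphi$ fixes every vertex of $\KG(n,2)$.
\item Show a vertex $\{A,A^c\}$ is determined by its set of neighbours in $\KG(n,2)$. A pair $P$ is adjacent to $\{A,A^c\}$ exactly when $P\subseteq A$ or $P\subseteq A^c$. This set of pairs is the edge set of the disjoint union of the complete graphs on $A$ and on $A^c$, and it recovers the partition $\{A,A^c\}$. Since $\varphi$ fixes the stratum $\KG(n,2)$ pointwise and preserves adjacency, it fixes every vertex, so $\varphi$ is the identity.
\end{enumerate}

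The main obstacle is step (1): the degree computation must be carried out carefully for all $n\ge5$, including the case where $k$ is close to $n/2$. If monotonicity fails in a small case, I would instead characterise $\KG(n,2)$ directly as the set of vertices of minimal degree, since step (3) needs only that the $k=2$ stratum is preserved. The remaining steps are routine.
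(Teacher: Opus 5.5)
Your proposal is correct, but its central step differs from the paper's. The paper uses the degree formula of Lemma~\ref{lem:degree} to show that every automorphism preserves every stratum. It then proves $\aut(\KG'(n))\cong\sym(n)$ by induction up the strata. Lemma~\ref{lem:restricted_link_determine_vertices} shows that a vertex of $\KG(n,k+1)$ is determined by its neighbours in $\KG(n,k)$, so the permutation induced on $\KG(n,k)$ is forced on $\KG(n,k+1)$.

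You instead split off $\KG(n,1)$ as the set of dominating vertices. You then replace the whole induction by one observation at stratum $2$. The neighbours of $A|A^c$ in $\KG(n,2)$ are the pairs inside $A$ or inside $A^c$, i.e.\ the edges of $K_A\sqcup K_{A^c}$, whose connected components are $A$ and $A^c$. This recovers every vertex of $\KG'(n)$ at once from $\KG(n,2)$, with no induction. It is shorter than the element-chasing proof of Lemma~\ref{lem:restricted_link_determine_vertices}, and it needs only that $\KG(n,2)$ is preserved setwise. The paper states its lemma stratum-to-stratum because it reuses it at $k=m-1$ for the top strata in the proof of Theorem~\ref{thm:KG_is_core}.

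A few small corrections:
\begin{itemize}
\item Step (1) is not really an obstacle. The exact count is $(2^k-k-2)+(2^{n-k}-(n-k)-2)=2^k+2^{n-k}-n-4$, which is the paper's degree minus the $n$ singleton neighbours. It is strictly decreasing for $2\le k\le n/2$, since $2^{n-k-1}>2^k$ whenever $k<(n-1)/2$.
\item It follows that $\KG(n,2)$ is the set of vertices of \emph{maximal} degree in $\KG'(n)$, not minimal as your fallback says.
\item In step (2), the phrase ``a pair cannot be nested in the complement of another'' is backwards. The condition $P\subseteq Q^c$ is exactly disjointness. What is impossible for $n\ge 5$ is $P^c\subseteq Q$, because $|P^c|\ge 3$.
\end{itemize}
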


Our proofs of Theorem \ref{into:core} and Theorem \ref{into:automorphism} both reduce to the known analogous results for classical Kneser graphs. In both cases, this is accomplished by showing that the maps (endomorphism or automorphism respectively) must preserve the strata $\KG(n,k)$. In the case of automorphisms, this is accomplished by counting the degree of each vertex while in the case of endomorphisms we examine the \emph{fractional chromatic number} of the star of each vertex.

\section*{Acknowledgments} This work was completed during summer research funded by Swarthmore College's Division of Natural Sciences and Engineering. We are grateful to Edgar Bering for recommending that we look at Gaster, Greene, and Vlamis's paper on coloring curve graphs as inspiration for this work. We are also thankful to Nick Vlamis for providing the initial opinion on the viability of various open problems in the area. We thank Pat Devlin for helping us navigate the combinatorics literature.

\section{Definitions and Notations}

\subsection{Graphs} When $X$ is a graph with vertex set $V(X)$ and edge set $E(X)$, we use $x \in X$ to denote $x \in V(X)$. An \emph{independent set} in $X$ is a subset of vertices where no two vertices are joined by an edge. 

Given a subset $S \subseteq V(X)$, the subgraph \emph{spanned} by $S$ is the graph with  vertex set $S$ and with $\{x,y\} \subseteq S$ getting an edge if and only if $\{x,y \} \in E(X)$.  Given $x \in V(X)$, we let $X - x$ denote the subgraph  spanned by all vertices except $x$. The \emph{link} of a vertex $\lk(x)$ is the subgraph of $X$ spanned by the set of vertices that are joined by an edge to $x$. The \emph{star} of a vertex $\st(x)$ is the subgraph spanned by $\lk(x)$ plus $x$. Given two graphs $X,Y$, the \emph{join} $X\ast Y$ is the graph obtained by joining each vertex of $X$ to each vertex of $Y$ by an edge. The star $\st(x)$ is always the join of $\lk(x)$ with the vertex $x$. 

A graph \emph{homomorphism} $\phi \colon X \to Y$ is a map from $V(X)$ to $V(Y)$ that preserves edges. If $Z$ is a subgraph of $X$, then $\phi(Z)$ denotes the subgraph of $Y$ with vertex set $ \phi(V(Z))$ and edges between $\phi(x)$ and $\phi(y)$ if $\{x,y\} \in E(Z)$. If $X$ is equal to $Y$, then we call a homomorphism an \emph{endomorphism}.  An \emph{automorphism} is an endomorphism that is bijective on  the vertices and edges.  We use $\aut(X)$ to denote the group of automorphisms of the graph $X$. We say the graph $X$ is a \emph{core} if every endomorphism of $X$ is an automorphism. Basic examples of cores include the complete graphs and cycles with an odd number of vertices.

\subsection{Total Kneser Graphs}
Henceforth $n$ and $k$ will be positive integers. Given a set $\Omega$ of size $n$, define a \emph{split} of $\Omega$ to be an unordered pair  $\{A,A^c\}$ where $A$ and $A^c$ are both non-empty subsets of $\Omega$. We will use $A|A^c$ to denote a split. The \emph{total Kneser graph}, $\KG(n)$, is the graph whose vertices are splits of a fixed set of size $n$ with an edge between the splits $A|A^c$ and $B|B^c$ if one of the following mutually exclusive options holds: $A\subseteq B$, $B \subseteq A$, $A \subseteq B^c$, or $A^c \subseteq B$. For concreteness, we will usually use the set $[n] = \{1,\dots, n\}$ as the set of size $n$ used to define $\KG(n)$.

For a positive integer $k \leq \lfloor\frac{n}{2} \rfloor$, the \emph{$k$-stratum} of $\KG(n)$ is the subgraph spanned by the vertices $A|A^c$ where $A$ has $k$ elements. We denote the $k$-stratum by $\KG(n,k)$. We use $\KG'(n)$ to denote the subgraph of $\KG(n)$ spanned by all the strata except $\KG(n,1)$. We call $\KG'(n)$ the \emph{truncated total Kneser graph}. Since one element sets in $[n]$ are either nested into or disjoint from every other non-empty subset of $[n]$, the first stratum $\KG(n,1)$ is isomorphic to the complete graph $K_n$ and $\KG(n)$ is the join of $\KG'(n)$ and $\KG(n,1)$.

When $k<n/2$, the stratum $\KG(n,k)$ is isomorphic to the classical \emph{Kneser graph}, $K_{n:k}$. The classical Kneser graph  $K_{n:k}$ is the graph whose vertices are $k$ element subsets of $[n]$ with edge between subsets if the subsets are disjoint. When $n$ is even and $k=n/2$, then $\KG(n,k)$ is the graph with   $\frac{1}{2}{n \choose n/2}$ vertices and no edges while $K_{n:k}$ is the graph that is the disjoint union of $\frac{1}{2}{n \choose n/2}$ edges.  We use $\strata(x) = k$ to denote that $x \in \KG(n)$ is an element of the stratum $\KG(n,k)$.

A classic fact about $\KG(n,k) = K_{n:k}$ is that the graph is a core when $n>2k$.

\begin{thm}[{\cite[Theorem 7.9.1]{GR_Alg_Graph_Theory_Book}}]
    If $n >2k$, then $\KG(n,k)$ is a core.
\end{thm}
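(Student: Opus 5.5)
The plan is to reduce the endomorphism to a permutation of $[n]$ using the Erd\H{o}s--Ko--Rado theorem, and then read off that it is an automorphism. For $i\in[n]$ let $S_i=\{v\in\KG(n,k): i\in v\}$ be the star at $i$, viewing vertices as $k$-subsets of $[n]$. Since $n>2k$, Erd\H{o}s--Ko--Rado, together with its uniqueness statement (Hilton--Milner), says the following: every independent set of $K_{n:k}$ has size at most ${n-1 \choose k-1}$, and the independent sets of exactly that size are precisely the stars $S_i$.

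Let $\phi$ be an endomorphism of $K_{n:k}$.
\begin{enumerate}[1.]
\item For each $i$, the preimage $\phi^{-1}(S_i)$ is independent. Indeed, if $u,w\in\phi^{-1}(S_i)$ were adjacent, then $\phi(u),\phi(w)$ would be adjacent vertices inside $S_i$, which is impossible. Hence $|\phi^{-1}(S_i)|\le {n-1\choose k-1}$.
\item Double count the pairs $(v,i)$ with $\phi(v)\in S_i$. Each vertex $v$ has image $\phi(v)$ of size $k$, so it lies in exactly $k$ stars. Therefore
\[
\sum_{i=1}^n |\phi^{-1}(S_i)| = k{n\choose k} = n{n-1\choose k-1}.
\]
Combined with step 1, every preimage has the maximum size. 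By the uniqueness part of EKR, for each $i$ there is some $\sigma(i)\in[n]$ with $\phi^{-1}(S_i)=S_{\sigma(i)}$.
\item Unwinding this equality gives, for every vertex $v$,
\[
\phi(v)=\{i : \sigma(i)\in v\}=\sigma^{-1}(v).
\]
\end{enumerate}

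It remains to show that $\sigma\colon[n]\to[n]$ is a bijection. Suppose instead that $\sigma(i)=\sigma(j)=a$ for some $i\neq j$. Since $|\phi(v)|=k$ for every $v$, the quantity $\sum_{b\in v}|\sigma^{-1}(b)|$ equals $k$ for every $k$-set $v$. Because $k<n$, exchanging one element of $v$ for an outside element shows that all fibre sizes $|\sigma^{-1}(b)|$ are equal. Their common value must be $1$, since the fibres partition $[n]$ into $n$ parts. This contradicts $|\sigma^{-1}(a)|\ge 2$, so $\sigma$ is a permutation. Then $\phi$ is the automorphism of $K_{n:k}$ induced by $\sigma^{-1}$, and hence $\KG(n,k)$ is a core.

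The main obstacle is the uniqueness statement in EKR, namely that the maximum independent sets are only the stars when $n>2k$. I would cite it rather than reprove it. The hypothesis $n>2k$ is genuinely needed here: when $n=2k$ there are extra maximum independent sets, and the conclusion fails.
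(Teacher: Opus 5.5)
Your proof is correct and is essentially the standard Erd\H{o}s--Ko--Rado argument behind the cited Godsil--Royle theorem; the paper gives no proof of its own. Uniqueness of the maximum independent sets forces each $\phi^{-1}(S_i)$ to be a star, where your double count replaces the usual vertex-transitivity lemma for preimages of maximum independent sets, and your fibre-size argument correctly shows that the induced map $\sigma$ is a permutation. One small aside: your closing claim that the conclusion fails at $n=2k$ needs $k\geq 2$, since $\KG(2,1)$ is a single vertex (and $K_{2:1}=K_2$), which is a core.
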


\subsection{Fractional Chromatic Numbers} The fractional chromatic number is a refinement of the classical chromatic number. Conceptually, one can think of a fractional coloring as a coloring of a graph where each vertex can  be assigned multiple colors, but adjacent vertices cannot have any colors in common. The fractional chromatic number, then measures the minimal ratio of total colors used to the number of colors per vertex.

To give the formal definition, let $\I(X)$ be the set of all independent sets in the graph $X$ and let $\I(X,x)$ be the subset of $\I(X)$ consisting of independent sets that contain $x \in X$. A \emph{fractional coloring} of $X$ is a function $f \colon \mathcal{I}(X) \to [0,\infty)$ so that  $\sum_{S \in \I(X,x)} f(S) \geq 1$ for each $x \in X$. The \emph{weight} of a fractional coloring is $\sum_{S \in \I(X)} f(S)$ and the \emph{fractional chromatic number} $\fc(X)$ is the minimal weight of a fractional coloring of $X$.  

The fractional chromatic number behaves similarly to the usual chromatic number with respect to joins and homomorphisms.

\begin{lem}
Let $X$ and $Y$ be graphs.
\begin{itemize}
    \item If $\phi \colon X \to Y$ is a graph homomorphism, then $\fc(X) \leq \fc(Y)$.
    \item     $\fc(X \ast Y) = \fc(X) + \fc(Y)$.
\end{itemize}
\end{lem}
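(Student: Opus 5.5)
For the first item I would pull fractional colorings back along $\phi$. The key observation is that if $S \in \I(Y)$, then $\phi^{-1}(S)$ is independent in $X$. Indeed, if $x,x' \in \phi^{-1}(S)$ were adjacent, then $\phi(x)$ and $\phi(x')$ would be adjacent in $Y$. Both lie in $S$, and graphs here have no loops, so adjacency forces $\phi(x) \neq \phi(x')$; this contradicts the independence of $S$.

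Given a fractional coloring $f$ of $Y$ of weight $\fc(Y)$, I would define $g \colon \I(X) \to [0,\infty)$ by
\[
g(T) = \sum_{S \in \I(Y),\ \phi^{-1}(S) = T} f(S).
\]
For $x \in X$, the sets $T \ni x$ collect exactly the sets $S \ni \phi(x)$. Hence $\sum_{T \in \I(X,x)} g(T) = \sum_{S \in \I(Y,\phi(x))} f(S) \geq 1$. Each $S$ is counted once overall, so the total weight of $g$ equals that of $f$. This gives $\fc(X) \leq \fc(Y)$.

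For the join I would prove two inequalities. The key structural fact is that every independent set of $X \ast Y$ lies entirely in $V(X)$ or entirely in $V(Y)$, since every vertex of $X$ is adjacent to every vertex of $Y$. Consequently $\I(X \ast Y) = \I(X) \sqcup \I(Y)$, up to the empty set, which contributes nothing useful.

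For the upper bound, I would take optimal fractional colorings $f_X$ and $f_Y$ and define $f$ on $\I(X\ast Y)$ by $f_X$ on the sets inside $X$ and by $f_Y$ on the sets inside $Y$. This is a fractional coloring of weight $\fc(X)+\fc(Y)$.

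For the lower bound, I would take any fractional coloring $f$ of $X \ast Y$. Its restriction to independent sets contained in $V(X)$ is a fractional coloring of $X$, because for $x \in X$ every independent set containing $x$ lies in $V(X)$. So that part has weight at least $\fc(X)$. Likewise, the restriction to sets inside $V(Y)$ has weight at least $\fc(Y)$. These two families are disjoint, so the total weight is at least $\fc(X)+\fc(Y)$.

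A minor point is that the minimum in the definition of $\fc$ is attained, since it is a finite linear program with a feasible solution. Even without this, the same argument works with near-optimal colorings and infima. The only step needing real care is the observation in the join case that independent sets cannot straddle the two sides; everything else is bookkeeping.
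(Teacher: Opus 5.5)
Your proposal is correct and follows the paper's proof: pull back fractional colorings via preimages of independent sets for the first item, and for the join use $\I(X\ast Y)=\I(X)\cup\I(Y)$ to combine optimal colorings (upper bound) and to restrict an arbitrary coloring to each side (lower bound). Your explicit pullback formula and your remarks about the empty set and about the minimum being attained just make precise what the paper leaves implicit.
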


\begin{proof}
The first bullet is immediate from the definition and the fact that the pre-images of independent sets under homomorphisms are independent sets.

    Since $\I(X\ast Y) = \I(X) \cup \I(Y)$, we have $\fc(X \ast Y) \leq \fc(X) + \fc(Y)$ simply by selecting a minimal weight fractional coloring of $X$ and of $Y$ separately.    Now, suppose $f$ is a minimal weight fractional coloring of $X \ast Y$. Then $f \vert_X$ and $f\vert_Y$ are fractional colorings of $X$ and $Y$ respectively. Thus we have $$ \fc(X \ast Y) = \sum_{S \in \I(X \ast Y)} f(S) = \sum_{S\in \I(X)} f(S) + \sum_{S \in \I(Y)}f(S) \geq \fc(X) + \fc(Y).$$
\end{proof}

The fractional chromatic number of the classical Kneser graph  $\KG(n,k)$ is well known to be   $n/k$ \cite[Corollary 7.7.3]{GR_Alg_Graph_Theory_Book}. Gaster, Greene, and Vlamis proved that putting the minimal weight fractional coloring on each stratum of $\KG(n)$ yields the fractional chromatic number of $\KG(n)$. 

\begin{thm}[\cite{GGV_Coloring_curves,GR_Alg_Graph_Theory_Book}]\label{thm:frac_chrom}
Let $n > 2k$ and $H_m$ denote the $m$th harmonic number $H_m = \sum_{i=1}^m \frac{1}{i}$. We have
$$\fc(\KG(n,k)) = n/k, \ \fc(\KG(n)) = n H_{\lfloor n/2 \rfloor} - p(n) ,  \text{ and } \fc(\KG'(n)) = n H_{\lfloor n/2 \rfloor} - n -  p(n),$$
where $p(n) =0$ if $n$ is odd and $1$ if $n$ is even.
\end{thm}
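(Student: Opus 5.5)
The plan is to prove the three equalities separately: an upper bound from explicit colorings, and a lower bound from a single averaging argument over the whole vertex set.

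\textbf{The classical statement.} The value $\fc(\KG(n,k)) = n/k$ for $n>2k$ is the standard computation. For the upper bound, the $n$ stars $\{A : i \in A\}$, each given weight $1/k$, cover every $k$-set exactly $k$ times, so their total weight is $n/k$. For the lower bound, $\KG(n,k)$ is vertex-transitive, so $\fc \geq |V|/\alpha$. The Erd\H{o}s--Ko--Rado theorem gives $\alpha = \binom{n-1}{k-1}$, hence $\fc \ge \binom nk/\binom{n-1}{k-1} = n/k$.

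\textbf{Upper bound for $\KG(n)$.} Since every independent set of a stratum is independent in $\KG(n)$, we can color each stratum separately and add the weights. Stratum $k<n/2$ contributes $n/k$. When $n$ is even, the middle stratum $k=n/2$ has no edges, so a single independent set of weight $1$ suffices. This gives a total of $\sum_{k\le \lfloor n/2\rfloor} n/k$ for odd $n$, and $nH_{n/2} - 2 + 1 = nH_{n/2}-1$ for even $n$. That is exactly $nH_{\lfloor n/2\rfloor} - p(n)$. Removing stratum $1$, which is $K_n$ joined to everything by the join lemma, subtracts exactly $n$ and gives the $\KG'(n)$ formula. Indeed $\KG(n) = \KG'(n) \ast K_n$, so $\fc(\KG(n)) = \fc(\KG'(n)) + n$, and it suffices to prove the $\KG(n)$ formula.

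\textbf{Lower bound, and the main obstacle.} This is the hard direction. I would use LP duality: exhibit a fractional clique, that is, weights $w$ on vertices such that every independent set has total weight at most $1$, and then $\fc \ge \sum w$. The natural choice is $w(A|A^c) = 1/\binom{n-1}{k-1}$ on stratum $k<n/2$. Each stratum then contributes $\binom nk/\binom{n-1}{k-1} = n/k$. On the middle stratum put $w = 1/\binom{n}{n/2}\cdot 2 \cdot \tfrac12 = 1/\binom{n}{n/2}$ per vertex, so that stratum contributes $1/2$; the weight will need adjusting so that the total matches $nH - 1$.

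The key step is the analogue of the Erd\H{o}s--Ko--Rado inequality for the total graph. An independent set is a family of splits that pairwise cross, and we need it to satisfy a LYM-type inequality
$$\sum_{k} \frac{|S\cap \KG(n,k)|}{\binom{n-1}{k-1}} \le 1 .$$
I would prove this with a Katona-style cyclic permutation argument. Fix a cyclic order of $[n]$ and consider splits both of whose sides are arcs. Two arc-splits cross exactly when they are neither nested nor disjoint, and a crossing family of arcs is small. Average over all $(n-1)!$ cyclic orders: a $k$-split is an arc in a fraction proportional to $1/\binom{n-1}{k-1}$ of them. The counting inside one cyclic order, namely bounding a pairwise-crossing family of arcs of mixed lengths, is the obstacle. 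It must be handled together with the $\tfrac12$ correction coming from the middle stratum, which is the source of $p(n)$. Failing that, I would cite Gaster--Greene--Vlamis, as the paper itself does.
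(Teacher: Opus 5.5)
The paper does not prove this theorem; it imports it from Gaster--Greene--Vlamis and Godsil--Royle. Your Kneser computation, the stratum-by-stratum upper bound, and the reduction $\fc(\KG(n))=\fc(\KG'(n))+n$ via the join lemma are all correct.

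\textbf{The gap.} The lower bound for $\KG(n)$ is the only nontrivial content, and you leave it open. Two things there are wrong or missing.

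First, your middle-stratum weight $1/\binom{n}{n/2}$ is off by a factor of $2$. With it, your fractional clique only certifies $nH_{n/2}-\tfrac32$. The correct weight is $1/\binom{n-1}{n/2-1}=2/\binom{n}{n/2}$, which is already the weight built into your displayed inequality. With it, the $\tfrac12\binom{n}{n/2}$ middle vertices contribute exactly $1$. So $p(n)$ simply records that this stratum contributes $1$ rather than $n/(n/2)=2$, and no separate ``$\tfrac12$ correction'' exists.

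Second, a stratum-$k$ split has both sides arcs in exactly $k!(n-k)!$ of the $(n-1)!$ cyclic orders. That is a fraction $k/\binom{n-1}{k-1}$, not one uniformly proportional to $1/\binom{n-1}{k-1}$. So the per-order statement you actually need is this: for a pairwise crossing family $\mathcal F$ of arc-splits in a fixed cyclic order, $\sum_{F\in\mathcal F}1/\strata(F)\le 1$.

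\textbf{The missing idea.} That per-order bound is short.
\begin{itemize}
\item Regard an arc-split as a chord joining two of the $n$ gaps of the cyclic order.
\item Two arc-splits sharing a gap are nested. Hence pairwise crossing arc-splits are exactly chords with distinct endpoints that pairwise interleave.
\item Let $s$ be the smallest stratum in $\mathcal F$, and let $I$ be the shorter arc of a member achieving it. Every other member has exactly one endpoint among the $s-1$ gaps interior to $I$, and these endpoints are distinct.
\item Hence $|\mathcal F|\le s$, and $\sum_{F\in\mathcal F} 1/\strata(F)\le s\cdot\tfrac1s=1$.
\end{itemize}
Summing over all cyclic orders and using the count $k!(n-k)!$ gives $\sum_{F\in S}1/\binom{n-1}{\strata(F)-1}\le 1$ for every independent set $S$. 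This holds uniformly across all strata, the middle one included with no special treatment. Weak LP duality then yields $\fc(\KG(n))\ge nH_{\lfloor n/2\rfloor}-p(n)$.

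Alternatively, choose for each split the side avoiding $n$. This identifies independent sets with intersecting Sperner families on $[n-1]$, and your inequality becomes the Greene--Katona--Kleitman inequality, which you could cite instead.

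With these two repairs your route is a complete, self-contained proof. Restricted to a single stratum $k<n/2$, it is Katona's proof of Erd\H{o}s--Ko--Rado.
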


\subsection{Permutations} We let $\sym(n)$ denote the group of bijections of $[n]$. Each permutation $\phi \in \sym(n)$ acts on the set of subsets of $[n]$ while preserving the size, nesting relations, and disjointness. Thus, each permutation  $\phi \in \sym(n)$ induces an element of $\aut(\KG(n))$. This action of $\phi \in \sym(n)$ restricts to an action on each stratum $\KG(n,k)$ individually and hence gives an action on $\KG'(n)$ as well. For the classical Kneser graph $\KG(n,k)$, these permutations are known to be all of the automorphisms.

\begin{thm}[{\cite[Corollary 7.8.2]{GR_Alg_Graph_Theory_Book}}]
    Let $n >2k$. The natural action of $\sym(n)$ on $\KG(n,k)$ induces an isomorphism $\sym(n) \cong \aut(\KG(n,k)).$
\end{thm}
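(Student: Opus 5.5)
The plan is to use the Erdős–Ko–Rado theorem to recover the points of $[n]$ from the graph structure of $\KG(n,k)=K_{n:k}$, and then show every automorphism is induced by a permutation. Throughout, assume $n>2k$ and identify vertices of $\KG(n,k)$ with $k$-subsets of $[n]$. The case $k=1$ is immediate, since $\KG(n,1)\cong K_n$ has automorphism group $\sym(n)$ acting in the natural way. So the real content is $k\ge 2$.

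\emph{Step 1: the action is injective.} Suppose $\sigma\in\sym(n)$ fixes every $k$-subset. Since $n-1\ge k$, for each $i\in[n]$ the intersection of all $k$-subsets containing $i$ is exactly $\{i\}$. The permutation $\sigma$ maps this family of sets to the family of $k$-subsets containing $\sigma(i)$, and it fixes each member of the family. Hence $\{\sigma(i)\}=\{i\}$, so $\sigma$ is the identity.

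\emph{Step 2: automorphisms permute the stars.} An independent set in $K_{n:k}$ is exactly an intersecting family of $k$-sets. By the Erdős–Ko–Rado theorem, for $n>2k$ every such family has size at most $\binom{n-1}{k-1}$. Moreover, the families attaining this bound are exactly the stars $\mathcal{S}_i=\{A : i\in A\}$. Any automorphism $\phi$ maps maximum independent sets to maximum independent sets. It therefore permutes the $n$ stars, which defines a map $\sigma\colon[n]\to[n]$ with $\phi(\mathcal{S}_i)=\mathcal{S}_{\sigma(i)}$. Distinct $i$ give distinct stars, so $\sigma$ is a bijection, using $\phi^{-1}$ for the reverse direction.

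\emph{Step 3: $\phi$ is induced by $\sigma$.} For a $k$-set $A$, the set $A$ is the unique $k$-set lying in $\bigcap_{i\in A}\mathcal{S}_i$. Applying $\phi$ gives
$$\phi(A)\in\bigcap_{i\in A}\mathcal{S}_{\sigma(i)}.$$
The only $k$-set in this intersection is $\sigma(A)$, so $\phi(A)=\sigma(A)$. Combined with Step 1, this gives $\sym(n)\cong\aut(\KG(n,k))$.

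The main obstacle is the uniqueness half of Erdős–Ko–Rado, namely that only stars attain $\binom{n-1}{k-1}$ when $n>2k$. This genuinely needs $n>2k$: at $n=2k$ there are non-star maximum families. I would prove it with Katona's cyclic-order argument. In a cyclic ordering of $[n]$, an intersecting family contains at most $k$ arcs. Equality analysis shows that the $k$ arcs used must all pass through a common point. A further step then shows this common point is the same for all cyclic orders, so the extremal family is a star. Alternatively, I would simply invoke the Hilton–Milner bound, which says non-star intersecting families are strictly smaller.
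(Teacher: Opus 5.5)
Your proof is correct and is essentially the standard argument behind the cited result. The paper gives no proof of its own and simply quotes Godsil--Royle, where Corollary 7.8.2 is deduced from the Erd\H{o}s--Ko--Rado theorem (including the fact that for $n>2k$ the maximum independent sets of $K_{n:k}$ are exactly the stars) by your steps: automorphisms permute the $n$ stars, and each $k$-set is the unique vertex in the intersection of the $k$ stars through its points.
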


\section{Automorphisms of the Total Kneser Graph}\label{sec:automorphisms}
 Since $\KG(n)$ is the join $\KG(n,1) \ast \KG'(n)$,  any pair of permutations $(\sigma,\tau) \in \sym(n) \times \sym(n)$ will give an automorphism by having $\sigma$ act on $\KG(n,1)$ and $\tau$ act on $\KG'(n)$.  We will prove that this action induces an isomorphism  $\sym(n) \times \sym(n) \cong \aut(\KG(n))$.

 First we calculate the degree of a vertex in terms of its stratum. This proves that automorphisms of $\KG(n)$ must preserve the strata.

\begin{lem}\label{lem:degree}
    Let $x \in \KG(n)$. If $\strata(x) = k$, then $\deg(x) = 2^{k}+2^{n-k} - 4$. Thus, every automorphism of $\KG(n)$ preserves the strata of $\KG(n)$.
\end{lem}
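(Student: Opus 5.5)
Fix a vertex $x = A|A^c$ with $|A| = k$, so $|A^c| = n-k$ and $k \leq \lfloor n/2 \rfloor$. I will count the neighbors of $x$ by turning the edge condition into a statement about a single subset. A split $B|B^c$ is adjacent to $x$ exactly when one of its two sides is comparable to $A$ or to $A^c$. Concretely, one of the four relations in the definition holds for some labelling of the split, which amounts to the following: some side $S$ of the split ($S = B$ or $S = B^c$) satisfies $S \subsetneq A$ or $S \subsetneq A^c$.

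To justify this, note $A \subseteq B$ iff $B^c \subseteq A^c$, and $A \subseteq B^c$ iff $B \subseteq A^c$. So every relation in the definition can be rewritten as a nonempty side of $B|B^c$ being contained in $A$ or in $A^c$. That containment must be proper unless $B|B^c = x$ itself.

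I will then set up a map from neighbors to nonempty proper subsets. Each neighbor $y \neq x$ determines a nonempty set $S$ that is a proper subset of $A$ or of $A^c$. These two families are disjoint, since a nonempty set cannot lie in both $A$ and $A^c$. Their sizes are $(2^k - 2) + (2^{n-k} - 2)$.

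The step I expect to need the most care is showing this assignment is a bijection onto that family. Conversely, each such $S$ gives the split $S|S^c$, which is adjacent to $x$. For injectivity I must check that a neighbor does not get counted twice, i.e.\ that both sides $B$ and $B^c$ cannot be proper subsets of $A$ or $A^c$.
\begin{itemize}
\item If $B \subsetneq A$ and $B^c \subsetneq A$, then $[n] = B \cup B^c \subsetneq A$, which is impossible.
\item If $B \subseteq A$ and $B^c \subseteq A^c$, then $B = A$ (and $B^c = A^c$), so $y = x$.
\end{itemize}
The remaining mixed cases are symmetric. Hence each neighbor corresponds to exactly one such $S$, and $\deg(x) = 2^k + 2^{n-k} - 4$. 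This includes the case $k = n/2$, where the two families are counted separately and are still disjoint.

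For the second claim, set $g(k) = 2^k + 2^{n-k}$. Writing $m = n - 2k \geq 0$, we have $g(k) = 2^k(1 + 2^m)$. For $1 \leq k < n/2$,
\[
g(k) - g(k+1) = 2^{n-k-1} - 2^k > 0,
\]
since $n - k - 1 > k - 1$ and in fact $n - k - 1 \geq k$. Thus $g$ is strictly decreasing on $1 \leq k \leq \lfloor n/2 \rfloor$, and the stratum of a vertex is determined by its degree. Automorphisms preserve degrees, so they preserve each stratum $\KG(n,k)$.
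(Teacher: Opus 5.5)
Your argument is the paper's argument: you biject the neighbors of $x$ with the nonempty proper subsets of $A$ or of $A^c$ via the unique side of the split lying in one of them, and then use that $g(k)=2^k+2^{n-k}$ is strictly decreasing. Your case check that no neighbor is counted twice is more careful than the paper's.

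There is one small slip in the monotonicity step. The bound $n-k-1\geq k$ only gives $g(k)-g(k+1)\geq 0$, not $>0$. For odd $n$ with $k=(n-1)/2$ the difference really is $0$; for example, $n=5$ gives $g(2)=g(3)=12$. This does not affect the conclusion, because only $1\leq k\leq \lfloor n/2\rfloor-1$ is needed (so that $k+1$ is a stratum). In that range $2(k+1)\leq n$ gives $n-k-1\geq k+1>k$, which is the strict inequality you want.
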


\begin{proof}
    Let $x=A|A^c$ with $|A|=k,$ so $|A^c|=n-k.$  If $B|B^c$ is adjacent to $A|A^c$, then exactly one of $B\subseteq A$,  $B^c \subseteq A^c$, $B^c\subseteq A$, or $B \subseteq A^c$ holds. Thus, there is a bijection between vertices adjacent to $A|A^c$ and the set 
    $$ \{B \subseteq A : B \neq \emptyset, B \neq A\} \cup \{ B \subseteq A^c : B \neq  \emptyset, B \neq A^c\}.$$
    Thus $\deg(x) = 2^{k}+2^{n-k} - 4$.

    Since $2^{k}+2^{n-k} - 4$ is a strictly decreasing function between $k=1$ and $k = \lfloor n/2 \rfloor$ two vertices can have the same degree if and only if they are in the same stratum. Thus, every automorphism of $\KG(n)$ preserves each stratum.
\end{proof}

Since $\aut(\KG(n,k)) \cong \sym(n)$ for $n >2k$, Lemma \ref{lem:degree} ensures that automorphisms of $\KG(n)$ restrict to a permutation on each stratum (except possibly the $n/2$-stratum). The goal now is to prove that the same permutation is induced on each stratum of $\KG'(n)$. To do this,  we examine the  \emph{$k$-restricted link} of a vertex $x \in \KG(n)$ , which we define to  be the subgraph $$L_k(x) = \lk(x) \cap \KG(n,k).$$  Our next lemma shows that a vertex on $\KG'(n)$ is determined by its restricted link.

\begin{lem}\label{lem:restricted_link_determine_vertices}
    Let $x,y\in \KG(n,k+1)$ for $2\leq k \leq n/2-1$. If $L_k(x) = L_k(y)$, then $x = y$. 
\end{lem}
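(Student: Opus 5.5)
Write $x = A|A^c$ with $|A| = k+1$. The hypothesis $2 \le k \le n/2 - 1$ gives $k+1 \le n/2$, so $|A^c| = n-k-1 \ge k+1 > k$. The plan is to show that $A$ (up to replacing it by its complement when $k+1 = n/2$) can be read off from $L_k(x)$.

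First I will describe $L_k(x)$ explicitly. A vertex $B|B^c$ with $|B| = k$ (and $|B^c| = n-k > k$) is adjacent to $A|A^c$ exactly when one of four containments holds.
\begin{itemize}
\item $B \subseteq A$: this is possible.
\item $B \subseteq A^c$: this is possible.
\item $A \subseteq B$ or $A^c \subseteq B$: both are impossible by size, since $|B| = k$ is smaller than $|A|$ and $|A^c|$.
\item $B^c \subseteq A$ or $B^c \subseteq A^c$: both are impossible, since $|B^c| = n-k$ exceeds both $k+1$ and $n-k-1$.
\end{itemize}
Hence
$$L_k(x) = \{\, B|B^c : |B| = k,\ B \subseteq A \text{ or } B \subseteq A^c \,\},$$
and the $k$-element side of each such split is determined, because $k < n-k$.

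Next I recover $A$. Consider the $k$-subsets $B$ appearing in $L_k(x)$.
\begin{itemize}
\item The union of those lying inside $A$ is $A$ itself, because $k < k+1$ and $A$ is covered by its $k$-subsets.
\item The union of those lying inside $A^c$ is $A^c$, because $|A^c| \ge k+1 > k$.
\item Two $k$-subsets $B \subseteq A$ and $B' \subseteq A^c$ are disjoint.
\item Two $k$-subsets of $A$ intersect, since $2k > k+1$ when $k \ge 2$. This is where $k \ge 2$ enters.
\end{itemize}
So I would define a graph $G$ on the $k$-sets of $L_k(x)$, joining two sets when they intersect. Within $A$ the $k$-subsets form a clique in $G$, and they are connected. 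Within $A^c$ the $k$-subsets are connected as well: with $m = n-k-1 \ge k+1$, any two $k$-subsets of an $m$-set are linked by a chain of $k$-subsets in which consecutive sets differ by swapping one element, so consecutive sets share $k-1 \ge 1$ elements. No edge of $G$ runs between the two groups.

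Therefore $G$ has exactly two connected components, and the unions of their vertex sets are $A$ and $A^c$. The set of these two unions is determined by $L_k(x)$ alone. If $L_k(x) = L_k(y)$ with $y = C|C^c$, then $\{A, A^c\} = \{C, C^c\}$, so $x = y$.

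The main obstacle is confirming that the size constraints rule out every other containment case; that check uses $k+1 \le n/2$ and $k < n-k$, as in the list above. The connectivity argument is routine. If it feels cleaner, one can skip components entirely: call two elements of $[n]$ equivalent when some $k$-set of $L_k(x)$ contains both. The equivalence classes, after taking transitive closure, are exactly $A$ and $A^c$, by the same chain argument.
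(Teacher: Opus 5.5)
Your proof is correct, but it is organized differently from the paper's. The paper argues by contradiction. It assumes $|A|=|B|=k+1$ and $\{A,A^c\}\neq\{B,B^c\}$, fixes $a_0\in A\setminus B$, and uses only one fact: any two elements of one side lie in a common $k$-subset of that side, which is where $k\ge 2$ enters. Applied to pairs $\{a_0,a\}\subseteq A$ this forces $A\subseteq B^c$, and applied to pairs $\{a_0,b\}\subseteq B^c$ it forces $B^c\subseteq A$, so $A=B^c$. You instead give an intrinsic reconstruction: $\{A,A^c\}$ is read off from $L_k(x)$ as the vertex-unions of the two connected components of the intersection graph on the $k$-sets. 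What your route buys is an explicit inverse of $x\mapsto L_k(x)$, a symmetric treatment of $A$ and $A^c$ with no ``without loss of generality'' step, and a transparent reason why $k=1$ fails (then $G$ has no edges). What the paper's route buys is brevity: it needs only pairwise co-occurrence, not connectivity of the $k$-subsets of $A^c$. In fact your swap-chain step can be replaced by the same trick. Given two $k$-subsets of $A^c$, a third $k$-subset of $A^c$ containing one element from each meets both, so those sets form a component of diameter at most $2$. Your closing remark about the element relation is essentially the paper's argument made constructive. There no transitive closure is needed, because two elements on the same side already share a $k$-set of $L_k(x)$.
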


\begin{proof}
    Let $x=A|A^c$ be a vertex of $\KG(n,k+1)$. The vertices in $\KG(n,k)$ that are adjacent to $x=A|A^c$ are exactly the vertices $$ \{C|C^c : |C|=k  \text{ and } C \subseteq A  \text{ or } C \subseteq A^c\}.$$ 
    
  Now suppose that $y = B|B^c \in \KG(n,k+1)$ has  $L_k(x)=L_k(y).$
    Then the collection of subsets of size $k$ contained in either $A$ or $A^c$ is the same as the collection of subsets of size $k$ contained in either $B$ or  $B^c$.
  
    For the purposes of contradiction, assume $\{A,A^c\} \neq \{B,B^c\}.$
    Without loss of generality, assume $|A|=|B|=k+1$. Thus, $A \setminus B \neq \emptyset$ as $A \subseteq B$ would force $A=B$.  Fix $a_0 \in A \setminus B$.

    Let $a \in A \setminus \{a_0\}$. Because $k >1$, there is a subset $C \subseteq A$ with $|C| = k$ and $a_0,a \in C$. Now $C$ must be contained in either $B$ or $B^c$ because $L_k(x)=L_k(y)$. However, $a_0 \notin B$ implies that $C \subseteq B^c$. Thus $a \in B^c$. 

   We now make a similar argument with $b \in B^c \setminus \{a_0\}$ and $a_0$. The set $\{b,a_0\}$  is contained in a $k$ element subset of $B^c$ and this subset must also be contained in $A$ because $L_k(x)=L_k(y)$ and $a_0 \notin A^c$. Hence $b \in A$. Together this implies $A = B^c$ which contradicts $\{A,A^c\} \neq \{B,B^c\}.$
\end{proof}

We now classify the automorphisms of $\KG'(n)$ and $\KG(n)$.

\begin{thm}
    Let $n \geq 5$. The action of $\sym(n)$ on $[n]$ induces an isomorphism $\sym(n)  \cong \aut(\KG'(n))$.
\end{thm}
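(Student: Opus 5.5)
The plan has three steps: show that automorphisms of $\KG'(n)$ preserve strata, pin down a single permutation on the lowest stratum, and propagate it upward through restricted links.

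\textbf{Injectivity of $\sym(n) \to \aut(\KG'(n))$.} For $n \geq 5$, the stratum $\KG(n,2)$ satisfies $n > 4$. By the classical result, $\sym(n)$ acts faithfully on it. Hence a permutation acting trivially on $\KG'(n)$ is the identity.

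\textbf{Strata are preserved.} Let $\psi \in \aut(\KG'(n))$. The degree of $x$ in $\KG'(n)$ with $\strata(x)=k \geq 2$ is the degree from Lemma~\ref{lem:degree} minus the number of stratum-$1$ neighbours. Every singleton is adjacent to everything, so there are exactly $n$ of these, and the degree in $\KG'(n)$ is $2^k+2^{n-k}-4-n$. This is still strictly decreasing for $1\le k \le \lfloor n/2\rfloor$, so $\psi$ maps each $\KG(n,k)$ to itself.

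\textbf{One permutation $\sigma$ on the $2$-stratum.} Since $\KG(n,2)$ is a Kneser graph with $n>4$, the theorem $\aut(\KG(n,k))\cong\sym(n)$ gives $\sigma\in\sym(n)$ with $\psi|_{\KG(n,2)}=\sigma|_{\KG(n,2)}$. Replace $\psi$ by $\sigma^{-1}\psi$; it now suffices to show that an automorphism fixing $\KG(n,2)$ pointwise is the identity.

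\textbf{Induction on the stratum.} Suppose $\psi$ fixes $\KG(n,k)$ pointwise, with $2 \le k \le n/2-1$, and take $x\in \KG(n,k+1)$. Since $\psi$ is an automorphism preserving strata, $\psi(L_k(x)) = L_k(\psi(x))$. Because $\psi$ is the identity on $\KG(n,k)$, this gives $L_k(x)=L_k(\psi(x))$. Both $x$ and $\psi(x)$ lie in $\KG(n,k+1)$, so Lemma~\ref{lem:restricted_link_determine_vertices} forces $\psi(x)=x$.

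The index range of that lemma, $2\le k\le n/2-1$, reaches $k+1=\lfloor n/2\rfloor$ whenever $\lfloor n/2\rfloor\ge 3$, i.e.\ $n\ge 6$. The even case $k+1=n/2$ includes the edgeless middle stratum, and it is covered because the lemma's proof never uses $|A|<n/2$. For $n=5$ the only strata are $2$, so nothing further is needed. Induction then gives $\psi=\mathrm{id}$, which proves surjectivity.

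The main obstacle is the base step: the middle stratum for even $n$ is not a Kneser graph, so it must be reached only through restricted links and never through the Kneser automorphism theorem. The case $n=5$ must also be checked separately, since there $\lfloor n/2\rfloor=2$ and the induction is vacuous.
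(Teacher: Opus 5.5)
Your proof is correct and follows the paper's route: the degree count from Lemma~\ref{lem:degree} (shifted by $n$) shows that automorphisms of $\KG'(n)$ preserve strata, the Kneser automorphism theorem on the $2$-stratum gives the base case, and Lemma~\ref{lem:restricted_link_determine_vertices} carries the induction upward, with its stated range $k+1\le n/2$ already including the middle stratum for even $n$. Normalizing by $\sigma^{-1}$ and checking faithfulness on $\KG(n,2)$ explicitly only make precise what the paper leaves implicit.
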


\begin{proof}
   Let $\phi \in \aut(\KG'(n))$.  Lemma \ref{lem:degree} implies that $\phi$  preserves the strata of $\KG'(n)$, because the degree of a vertex in $\KG'(n)$ is exactly $n$ less than the degree of a vertex in $\KG(n)$.
   Define $\phi_k \in \aut(\KG(n,k))$ to be the restriction of $\phi$ to $\KG(n,k)$. If $k <n/2$, then the inclusion $\sym(n)$ into $\aut(\KG(n,k))$ is an isomorphism. Thus for each $1<k<n/2$, $\phi_k$ will act by some permutation  on $\KG(n,k)$. We  show that if $\phi_k =\sigma \in \sym(n)$, then $\phi_{k+1} = \sigma$ as well.

   Let $x= A|A^c \in \KG(n,k+1)$ and let $\sigma \in \sym(n)$ so that $\phi_k = \sigma$. Because $\phi$ is an automorphism $$\phi
    (L_k(x))= L_k(\phi(x)).$$
   Since $\phi_k = \sigma$, we have $$ L_k(\phi(x)) = \phi
    (L_k(x))= \phi_k(L_k(x)) = \sigma(L_k(x)) = L_k(\sigma(A)|\sigma(A)^c).$$
     By Lemma \ref{lem:restricted_link_determine_vertices}, a vertex in $\KG(n, k+1)$ is uniquely determined by its restricted link in $\KG(n,k)$. Hence, $$\phi(x)=\sigma(A)|\sigma(A)^c.$$
    Therefore, $\phi_{k+1}$ induces the permutation $\sigma$ on $\KG(n,k+1)$.

    Using induction with the base case of $k=2$, we now have that $\phi$ acts on every stratum of $ \KG'(n)$ by the same permutation. Thus the inclusion of $\sym(n)$ to $\aut(\KG'(n))$ is an isomorphism.
\end{proof}

\begin{cor}
    For $n \geq 5$, the inclusion $\sym(n) \times \sym(n)  \to \aut(\KG(n))$  where the first factor acts on $\KG(n,1)$ and the second factor acts on $\KG'(n)$ is an isomorphism.
\end{cor}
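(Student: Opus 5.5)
The plan is to show that every automorphism $\phi$ of $\KG(n)$ decomposes as an automorphism of $\KG(n,1)$ together with an automorphism of $\KG'(n)$, and then to identify each piece using the results already established.

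First, Lemma \ref{lem:degree} tells us that $\phi$ preserves every stratum. In particular $\phi$ maps $\KG(n,1)$ onto itself, so it also maps the complement $\KG'(n)$ onto itself. Because $\phi$ is a bijection on vertices and edges, its restrictions $\phi_1 = \phi|_{\KG(n,1)}$ and $\phi' = \phi|_{\KG'(n)}$ are automorphisms of the spanned subgraphs. Conversely, $\phi$ is completely determined by the pair $(\phi_1,\phi')$. This gives an injective homomorphism
$$\aut(\KG(n)) \to \aut(\KG(n,1)) \times \aut(\KG'(n)).$$
It is also surjective. The reason is that $\KG(n)$ is the join $\KG(n,1)\ast\KG'(n)$, so every edge either lies inside one factor or joins the two factors. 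Any pair of automorphisms of the factors therefore preserves all edges, and so defines an automorphism of $\KG(n)$.

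Next I identify each factor. Since $\KG(n,1)\cong K_n$, any bijection of its $n$ vertices is an automorphism. The vertices of $\KG(n,1)$ correspond bijectively to the elements of $[n]$ via $\{i\}|\{i\}^c \mapsto i$, so the natural action of $\sym(n)$ gives $\aut(\KG(n,1))\cong \sym(n)$. For the second factor, the preceding theorem gives $\aut(\KG'(n))\cong\sym(n)$ through the natural action when $n\ge 5$.

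Finally, I check that the composite isomorphism $\sym(n)\times\sym(n)\to\aut(\KG(n))$ is exactly the inclusion described in the statement. This holds because the pair $(\sigma,\tau)$ is sent to the automorphism acting by $\sigma$ on $\KG(n,1)$ and by $\tau$ on $\KG'(n)$.

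There is no serious obstacle here. The one point needing care is that strata preservation is used only to separate $\KG(n,1)$ from $\KG'(n)$, and Lemma \ref{lem:degree} covers this, since degree distinguishes the first stratum from all the others.
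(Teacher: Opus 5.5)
Your proposal is correct and follows the same route as the paper: degree-based strata preservation separates $\KG(n,1)$ from $\KG'(n)$, the join structure gives $\aut(\KG(n)) \cong \aut(\KG(n,1)) \times \aut(\KG'(n))$, and the factors are identified via $K_n$ and the preceding theorem. You simply make explicit the injectivity and surjectivity of the product decomposition, which the paper leaves implicit.
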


\begin{proof}
    Since the automorphisms of $\KG(n)$ preserve the strata, they also preserve $\KG'(n)$. Combining this with the fact that $\KG(n)$ is the join of $ \KG'(n)$ and $\KG(n,1)$, we have $\aut(\KG(n)) \cong \aut(\KG'(n)) \times \aut(\KG(n,1)).$ Since $\KG(n,1)$ is the complete graph on $n$ vertices, the result follows from the fact that $\sym(n) \cong \aut(\KG'(n))$.
\end{proof}

We conclude by noting that $\aut(\KG(n))$ does not equal $\sym(n) \times \sym(n)$ when $2 \leq n \leq 4$ (in the $n=1$ case $\KG(1)$ is empty).  $\KG(2)$ is a single vertex group, so it has a single automorphism. $\KG(3)$ is a 3-cycle, so $\aut(\KG(3))  \cong \sym(3)$. For $\KG(4)$, $\KG'(4) = \KG(4,2)$ is the graph with 3 vertices and no edges. Thus,  $\aut(\KG(4)) \cong \sym(4) \times \sym(3)$. In this case, the kernel of the $\sym(4)$ action on $\KG'(4) = \KG(4,2)$ is the Klein four subgroup of $\sym(4)$. 

\section{Endomorphisms of Total Kneser Graphs}

We now turn to our main result of proving $\KG(n)$ and $\KG'(n)$ are cores. The heart of our proof is to show that for any homomorphism $\phi$ we have  $$\phi(\KG(n,k)) \subseteq \KG(n,k)$$ for each stratum. This will ensure that every homomorphism of $\KG(n)$ restricts to a homomorphism on the strata. We can then invoke the fact that $\KG(n,k)$ is a core for nearly all $k$ to prove that $\phi$ is bijective.

Our first proposition says that if a homomorphism preserves the strata up to stratum $k-1$, then it cannot lower the vertices in stratum $k$.

\begin{prop}\label{prop:inductive_step}
      Let $k \in \{2,\dots, \lfloor n/2 \rfloor\}$ and $\phi \colon \KG(n) \to \KG(n)$ be a graph homomorphism. If $\phi(\KG(n,j)) = \KG(n,j)$ for all integers $1\leq j \leq k-1$, then $\strata(\phi(x)) \geq \strata(x)$ for all $x \in \KG(n,k)$.
\end{prop}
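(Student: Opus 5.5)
The plan is to argue by contradiction. Suppose $x = A|A^c \in \KG(n,k)$ has $\phi(x) = y$ with $\strata(y) = j < k$. Since $\phi$ is a homomorphism, it maps $\st(x)$ into $\st(y)$: $x$ goes to $y$ and $\lk(x)$ goes into $\lk(y)$. By the first bullet of the lemma on fractional chromatic numbers, this gives $\fc(\st(x)) \le \fc(\st(y))$. So it suffices to show that $\fc(\st(\cdot))$ is strictly larger on stratum $k$ than on any lower stratum.

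\textbf{Computing the star.} First I will compute $\fc(\st(x))$ as a function of $\strata(x)$ alone. As in the proof of Lemma~\ref{lem:degree}, the neighbours of $A|A^c$ correspond to the proper nonempty subsets of $A$ together with those of $A^c$. A subset of $A$ and a subset of $A^c$ are disjoint, hence adjacent. So $\lk(x)$ is a join $Q(A) \ast Q(A^c)$. Here $Q(S)$ is the graph on proper nonempty subsets of $S$, with adjacency given by being nested or disjoint; complement relations cannot occur inside $S$.

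Next, identify $Q(S)$ for $|S| = m$ with $\KG(m+1)$ minus one vertex. Send a split of $S \cup \{\ast\}$ to its side not containing $\ast$. The only missing vertex is $S|\{\ast\}$, which lies in the first stratum and so is universal. Hence $\fc(Q(S)) = \fc(\KG(m+1)) - 1$. Using the join formula,
$$\fc(\st(x)) = \fc(\KG(k+1)) + \fc(\KG(n-k+1)) - 1 =: g(k),$$
which Theorem~\ref{thm:frac_chrom} makes explicit via $(m)H_{\lfloor m/2\rfloor} - p(m)$.

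\textbf{Monotonicity of $g$.} The main step is to show $g$ is strictly decreasing on $\{1,\dots,\lfloor n/2\rfloor\}$. Then $g(k) > g(j)$ for $j < k$, contradicting $\fc(\st(x)) \le \fc(\st(y))$. Morally, increasing $k$ by one transfers a point from the large side to the small side. The function $m \mapsto \fc(\KG(m))$ grows roughly like $m\log m$, which is convex, so the loss on the large side should dominate the gain on the small side. Concretely, I would compare the increments $\fc(\KG(m+1)) - \fc(\KG(m))$ at $m = k+1$ and at $m = n-k$ using the harmonic-number formula. I would handle the parity corrections $p(m)$ and the floors with some care, since at $k$ near $n/2$ the two sides are nearly equal and the inequality is tightest. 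This estimate is the step I expect to be the main obstacle.

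\textbf{Fallback using the hypothesis.} If strict monotonicity fails for some small $n$ or edge case, I would use the hypothesis $\phi(\KG(n,j)) = \KG(n,j)$ for $j < k$. By finiteness $\phi$ is then a bijection on each lower stratum. Hence $\phi$ maps the lower-strata part of $\lk(x)$ injectively into the lower-strata part of $\lk(y)$, preserving strata. So for each $i < k$, the number of stratum-$i$ neighbours of $x$ (namely the $i$-subsets of $A$ or of $A^c$) is at most the number of stratum-$i$ neighbours of $y$. I would compare these counts at $i = k-1$: for $x$ they include the $k$ subsets of $A$ of size $k-1$. For $y = B|B^c$ with $|B| = j < k$, such neighbours arise only from subsets of $B^c$, or from supersets of $B$ of size $k-1$ whose complement then contains $B^c$, which is impossible for $n > 2(k-1)$. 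This counting should give a contradiction in the remaining cases.
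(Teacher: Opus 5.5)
\textbf{The main step reverses the inequality, so this approach cannot work.} Your star computation is correct: $\fc(\st(x))=g(\strata(x))$ with $g(m)=\fc(\KG(m+1))+\fc(\KG(n-m+1))-1$. This is Lemma~\ref{lem:link_structure}, and your derivation via $\lk(x)=Q(A)\ast Q(A^c)$ is clean. But your convexity heuristic says $g$ is non-increasing, and indeed $g(m+1)-g(m)=H_{\lfloor (m+1)/2\rfloor}-H_{\lfloor (n-m)/2\rfloor}\le 0$. So for $j<k$ you get $g(k)\le g(j)$, which is exactly the inequality $\fc(\st(x))\le \fc(\st(\phi(x)))$ that every homomorphism satisfies. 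There is no contradiction to be had. Your step ``strictly decreasing, hence $g(k)>g(j)$ for $j<k$'' has the direction backwards. The extreme case makes this plain: a vertex of $\KG(n,1)$ is adjacent to everything, so its star is all of $\KG(n)$, and this invariant can never stop a vertex from being sent down to the first stratum. The paper uses this invariant only for the opposite statement, Lemma~\ref{lem:strata_do_not_increase} (strata cannot go up). It says nothing about strata going down.

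\textbf{The fallback does not fill the gap.} Counting stratum-$i$ neighbours does give a contradiction when $j=1$, taking $i=1$: $x$ is adjacent to all $n$ vertices of $\KG(n,1)$, but $y$ only to $n-1$. It fails in general. For $n=10$, $k=4$, $j=3$, $i=k-1=3$, $x$ has $\binom{4}{3}+\binom{6}{3}=24$ neighbours in $\KG(10,3)$ while $y$ has $\binom{7}{3}=35$. For $n=6$, $k=3$, $j=2$, the counts agree at both $i=1$ and $i=2$. Your count for $y$ also wrongly discards the $(k-1)$-sets $C\supsetneq B$, which are neighbours by nesting when $j<k-1$; including them only makes $y$'s count larger.

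The missing idea is to use the hypothesis vertex by vertex rather than through cardinalities. The restriction of $\phi$ to $\KG(n,j)$ is surjective, hence a bijective endomorphism of a finite graph, and $n>2j$. So it is an automorphism induced by some $\sigma\in\sym(n)$, and after composing with $\sigma^{-1}$ you may assume $\phi$ fixes $\KG(n,j)$ pointwise. Then $\phi(x)=y=\phi(y)$ forces $x$ and $y$ to be non-adjacent. Hence $A\cap B$, $A\cap B^c$, $A^c\cap B$, $A^c\cap B^c$ are all non-empty, which is already impossible if $j=1$. Now choose $C\subseteq A$ with $|C|=j$ meeting both $B$ and $B^c$. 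Then $z=C|C^c$ is adjacent to $x$. However, $\phi(z)=z$ is not adjacent to $\phi(x)=y$, because $C^c\supseteq A^c$ also meets both $B$ and $B^c$. This contradiction proves the proposition.
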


\begin{proof}
    Let $x=A|A^c\in \KG(n,k)$.  Suppose for contradiction that $\phi(x)=y=B| B^c\in \KG(n,j)$ for some $j\leq k-1$.

Since $\phi$ preserves $\KG(n,j)$ and each $\KG(n,j)$ is a core, the restriction $\phi|_{\KG(n,j)}$
is an automorphism. Thus, there is $\sigma \in \sym(n)$ so that $\phi|_{\KG(n,j)} = \sigma$.  We can therefore assume that $\phi$ fixes every vertex in $\KG(n,j)$ by replacing $\phi$ with $\phi \circ \sigma^{-1}$.

Since $\phi(x)=\phi(y)=y$, it is impossible for $x$ and $y$ to be joined by an edge of $\KG(n)$. Thus each of the  intersections $A\cap B, A\cap B^c, A^c\cap B, A^c\cap B^c$
must  be non-empty.  We can therefore choose  $C\subseteq A$ so that $|C| = j$,
$C\cap B\neq \emptyset$ and $C \cap B^c\neq\emptyset.
$ Let $z=C| C^c\in \KG(n,j).$ Since $C\subseteq A$, $x$ and $z$ are joined by an edge. Thus, $\phi(z) = z$ and $\phi(x) = y$ are also joined by an edge. However, this creates a contradiction because  $$C\cap B\neq \emptyset, C \cap B^c\neq\emptyset, C^c\cap B\supseteq A^c\cap B\neq \emptyset, \text{ and }  C^c\cap B^c\supseteq A^c\cap B^c\neq \emptyset $$ means that $z$ and $y$ are not joined by an edge.  Thus $\strata(\phi(x)) < \strata(x)$ is impossible. 
\end{proof}

\begin{cor}\label{cor:inductive_step}
    Let $k \in \{2,\dots, \lfloor n/2 \rfloor\}$ and $\phi \colon \KG(n) \to \KG(n)$ be a graph homomorphism. Suppose that for  all $x\in \KG(n,k)$, we have $\strata(\phi(x)) \leq \strata(x)$. In this case, if $\phi(\KG(n,j)) = \KG(n,j)$ for all integers $1\leq j \leq k-1$, then $\phi(\KG(n,k)) \subseteq \KG(n,k)$.     Moreover, if $k < n/2$ then we can conclude $\phi(\KG(n,k)) =\KG(n,k)$.
\end{cor}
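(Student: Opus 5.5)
The first claim should follow by squeezing the stratum of $\phi(x)$ from both sides. Fix $x \in \KG(n,k)$. The hypotheses of Proposition \ref{prop:inductive_step} hold verbatim, since $k \in \{2,\dots,\lfloor n/2\rfloor\}$ and $\phi(\KG(n,j)) = \KG(n,j)$ for all $1 \le j \le k-1$. That proposition therefore gives $\strata(\phi(x)) \ge \strata(x) = k$. Combined with the standing assumption $\strata(\phi(x)) \le \strata(x)$, this yields $\strata(\phi(x)) = k$. Since $x$ was arbitrary, $\phi(\KG(n,k)) \subseteq \KG(n,k)$.

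For the ``moreover'' part, assume $k < n/2$, so $n > 2k$. By the previous paragraph, $\phi$ restricts to a map $\phi|_{\KG(n,k)} \colon \KG(n,k) \to \KG(n,k)$ on vertices. This restriction preserves edges, because $\KG(n,k)$ is a spanned subgraph of $\KG(n)$ and $\phi$ is a homomorphism of $\KG(n)$. It is therefore an endomorphism of $\KG(n,k)$. By the cited theorem that $\KG(n,k)$ is a core when $n > 2k$, this endomorphism is an automorphism, and in particular it is surjective onto the vertices of $\KG(n,k)$. Hence $\phi(\KG(n,k)) = \KG(n,k)$ as vertex sets.

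I expect no real obstacle here, since the corollary is essentially bookkeeping on top of Proposition \ref{prop:inductive_step}. The one point needing care is interpreting the equality $\phi(\KG(n,k)) = \KG(n,k)$ when $\phi(Z)$ is defined as the image graph. Surjectivity on vertices is immediate from the core property. If edge-level equality is intended, it also holds: an automorphism of $\KG(n,k)$ is bijective on edges, so every edge of $\KG(n,k)$ is the image of an edge of $\KG(n,k)$. The boundary case $k = n/2$ is deliberately excluded from the ``moreover'' clause, since $\KG(n,n/2)$ has no edges and is not a core, so only the inclusion is claimed there.
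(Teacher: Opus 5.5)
Your proposal is correct and matches the paper's proof. The inclusion comes from squeezing $\strata(\phi(x))$ between Proposition \ref{prop:inductive_step} and the hypothesis $\strata(\phi(x)) \leq \strata(x)$, and the equality for $k < n/2$ comes from $\KG(n,k)$ being a core.
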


\begin{proof}
    The first statement follows immediately from Proposition \ref{prop:inductive_step} plus the assumption that $\strata(\phi(x)) \leq \strata(x)$. The moreover statement is true because $\KG(n,k)$ is a core  for $1 \leq k <n/2$. Thus if $\phi\vert_{\KG(n,k)}$ has image in $\KG(n,k)$, then $\phi\vert_{\KG(n,k)}$ must be an automorphism of $\KG(n,k)$, which ensures $\phi(\KG(n,k)) =\KG(n,k)$.
\end{proof}

In light of Corollary \ref{cor:inductive_step}, if we can show that the stratum of a vertex cannot decrease, then we can inductively prove that the strata are fixed. To prevent the lowering of the stratum, we look at the fractional chromatic number of the star of each vertex.

\begin{lem}\label{lem:link_structure}
    Let $x \in \KG(n)$. If $\strata(x) = k \leq n/2$, then $$\fc(\st(x)) = \fc(\KG(k+1)) + \fc(\KG(n-k+1)) -1.$$
\end{lem}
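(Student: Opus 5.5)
The plan is to describe the link of $x$ explicitly as a join of two graphs, identify each factor with a total Kneser graph minus a cone point, and then apply the join formula for $\fc$ from the lemma above.

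\textbf{Step 1: the link splits as a join.} Write $x=A|A^c$ with $|A|=k$. As in the proof of Lemma \ref{lem:degree}, the neighbours of $x$ are in bijection with the proper nonempty subsets of $A$ together with the proper nonempty subsets of $A^c$. Each such $C$ is recorded by the split $C|C^c$.

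Let $P(A)$ be the subgraph of $\lk(x)$ spanned by the splits $C|C^c$ with $\emptyset\neq C\subsetneq A$, and define $P(A^c)$ in the same way. Adjacency is then checked case by case.
\begin{itemize}
\item If $C\subseteq A$ and $D\subseteq A^c$, then $C\cap D=\emptyset$, so $C|C^c$ and $D|D^c$ are adjacent. Hence every vertex of $P(A)$ is joined to every vertex of $P(A^c)$.
\item If $C,D\subseteq A$, the options $C^c\subseteq D$ or $D^c\subseteq C$ are impossible, since $C^c\supseteq A^c\neq\emptyset$ is disjoint from $D$. So $C|C^c$ and $D|D^c$ are adjacent exactly when $C,D$ are nested or disjoint.
\end{itemize}
Therefore $\lk(x)=P(A)\ast P(A^c)$ and $\st(x)=\{x\}\ast P(A)\ast P(A^c)$.

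\textbf{Step 2: identify $P(A)$ with a total Kneser graph minus a cone point.} Add a new point $\ast$ and consider $\KG(k+1)$ on the set $A\cup\{\ast\}$. Represent each split by its side not containing $\ast$; this gives a bijection between vertices of $\KG(k+1)$ and nonempty subsets $S\subseteq A$.

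For two such representatives $S,T$, the condition $S^c\subseteq T$ cannot hold, because $S^c$ contains $\ast$ and $T$ does not. So $S$ and $T$ are adjacent exactly when they are nested or disjoint. The vertex $S=A$ (the split $A|\{\ast\}$) contains every other representative, so it is adjacent to all other vertices.

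It follows that $\KG(k+1)\cong \{\mathrm{pt}\}\ast P(A)$, and symmetrically $\KG(n-k+1)\cong\{\mathrm{pt}\}\ast P(A^c)$.

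\textbf{Step 3: apply the join formula.} The join formula gives $\fc(\KG(k+1))=1+\fc(P(A))$, and similarly $\fc(\KG(n-k+1))=1+\fc(P(A^c))$. Applying it again to $\st(x)$,
$$\fc(\st(x))=1+\fc(P(A))+\fc(P(A^c))=\fc(\KG(k+1))+\fc(\KG(n-k+1))-1.$$

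\textbf{Degenerate case.} When $k=1$, $P(A)$ is empty and $\KG(2)$ is a single vertex. This is consistent with the formula if we use the convention $\fc(\emptyset)=0$, and the join formula holds trivially when one factor is empty.

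\textbf{Main obstacle.} The only delicate point is Step 2: checking that the complement-type adjacencies ($C^c\subseteq D$) never occur inside $P(A)$, and that the extra vertex $A|\{\ast\}$ is a genuine cone point. Once these are verified, the rest is bookkeeping with the join formula.
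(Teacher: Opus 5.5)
Your proof is correct and follows essentially the same route as the paper. Your $P(A)$ and $P(A^c)$ are exactly the paper's $\nestedset$ and $\shiftedset$, and the paper likewise writes $\st(x)$ as a join of these two pieces (with the cone point $x$ absorbed into one factor), identifies them with $\KG(k+1)$ and $\KG(n-k+1)$, and applies the join formula. The difference is only presentational: your symmetric ``add a point $\ast$'' identification handles both factors uniformly, so you avoid the paper's explicit map $\theta$ and its reduction to $A=[k]$.
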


\begin{proof}
    For this proof, we use an isomorphic formulation of the total Kneser graph. Consider the vertices of $\KG(n)$ as the non-empty subsets of $[n]$ that do not contain the  element $n$ with edges between subsets $A$ and $B$ if either $A\subseteq B$, $B \subseteq A$, or $A \cap B = \emptyset$. The assignment $A \to A|A^c$ gives an isomorphism with our previous definition of $\KG(n)$. 

    Let $x = A \in \KG(n)$ with $\strata(x) = k$. Because $\aut(\KG(n))$ acts transitively on each stratum, we can assume without loss of generality that $A = \{1,\dots, k\}$. We  partition the vertices of $\lk(x)$ into two sets:
    \begin{itemize}
        \item $\nestedset$ the set of all non-empty proper subsets of $[k]$,
        \item $\shiftedset$ the set of vertices of $\lk(x)$ that either contain $[k]$ or are disjoint from $[k]$.
    \end{itemize}

 Let $X_\nestedset$ and $X_\shiftedset$ be the subgraphs of $\KG(n)$ spanned by $\nestedset \cup \{x\}$ and $\shiftedset \cup \{x\}$ respectively. To see that $\st(x) = X_\nestedset \ast (X_\shiftedset-x)$, let $B \in \nestedset$ and $C \in \shiftedset$. If $C$ is disjoint from $[k]$, then $C \cap B = \emptyset$. If $C \cap [k] \neq \emptyset$, then $[k] \subseteq C$, which implies $ B \subseteq C$. In all cases, $B$ and $C$ are joined by an edge of $\KG(n)$. Thus, every vertex of $\nestedset$ is joined by an edge to every vertex of $\shiftedset$. We will now argue that $X_\nestedset = \KG(k+1)$ and $X_\shiftedset$ is isomorphic to $\KG(n-k+1)$.
   
    First, observe that $\nestedset \cup \{x\}$  is the set of all non-empty subsets of $[k+1]$ that do not contain $k+1$. Because edges in $\KG(n)$ are for nesting or disjointness, $X_\nestedset$  is  precisely $\KG(k+1)$. 

    For $X_\shiftedset$, we define the following map $\theta\colon V(X_\shiftedset) \to V(\KG(n-k+1))$.
    \begin{enumerate}
        \item If $B \in V(X_\shiftedset)$ is disjoint from $[k]$, then $\theta(B) = \{ i \in [n-k+1] : i + k \in B\}$, aka the subset obtained by subtracting $k$ from each element of $B$.
        \item If  $B \in V(X_\shiftedset)$ contains $[k]$, then $\theta(B) = \{i \in [n-k+1] : i+k \in B\} \cup \{n-k\}$, aka the subset obtained by subtracting $k$ from each element of $B$ and then adding in the element $n-k$.
    \end{enumerate}
    This map is bijective as the first option achieves all the non-empty subsets of  $[n-k+1]$ that do not contain either $n-k$ or $n-k+1$ (because none of the $B \in\shiftedset$ contain $n$ or $n+1$) while the second option achieves all the subsets of $[n-k+1]$ that do not contain $n-k+1$ but do contain $n-k$.

    To verify that $\theta$ preserves the edges, let $B,C$ be elements of $\shiftedset \cup \{[k]\}$ that are either nested or disjoint. If $B\cap C = \emptyset$, then at most one of $B$ or $C$ can contain $[k]$. Thus, $\theta(B) \cap \theta(C) = \emptyset$. If $B \subseteq C$, then every $i \in \theta(B)$ is either $n-k$ or equal to $b-k$ for some $b\in B$. In the latter case, $i \in \theta(C)$ because $b \in C$. If $i=n-k$, then $i = n-k \in \theta(C)$ because  $[k]\subseteq B$ implies $[k] \subseteq C$. In both cases, $B \subseteq C$ implies $\theta(B) \subseteq \theta(C)$.

    Finally, to see that $\theta$ also preserves  non-edges, observe that when  $B,C \in \shiftedset \cup \{[k]\}$ overlap but are not nested then $\theta(B)$ and $\theta(C)$ also overlap without nesting.

    We have now shown that $X_\nestedset$ and $X_\shiftedset$ are isomorphic to $\KG(k+1)$ and $\KG(n-k+1)$ respectively. This implies that $$\fc(X_\nestedset) = \fc(\KG(k+1)) \text{ and } \fc(X_\shiftedset - x) = \fc(\KG(n-k+1)) -1,$$ which completes the proof (the minus 1 follows from the fact that every vertex of $X_\shiftedset - x$ is joined by an edge to $x$). 
\end{proof}

Since homomorphisms must send stars into stars, the fractional chromatic number of $\st(x)$ is an invariant of $x$ that cannot decrease under homomorphisms. The next lemma uses the fractional chromatic number of the star to prove that the strata can only increase in a very special case.

\begin{lem}\label{lem:strata_do_not_increase}
    Let $\phi \colon \KG(n) \to \KG(n)$ be a graph homomorphism.
    If $\strata(\phi(x)) > \strata(x)$, then $4|n$, $\strata(x) = \frac{n}{2}-1$, and $\strata(\phi(x)) = \frac{n}{2}$. 
\end{lem}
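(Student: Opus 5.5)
Since $\phi$ is a homomorphism, it maps $\st(x)$ into $\st(\phi(x))$: the vertex $x$ goes to $\phi(x)$, and every neighbour of $x$ goes to a neighbour of $\phi(x)$. By the first bullet of the lemma on fractional chromatic numbers, this gives $\fc(\st(x)) \leq \fc(\st(\phi(x)))$. I would define
$$g(k) = \fc(\KG(k+1)) + \fc(\KG(n-k+1)) - 1$$
for $1 \le k \le \lfloor n/2 \rfloor$. By Lemma \ref{lem:link_structure}, $\fc(\st(x)) = g(\strata(x))$. The whole proof then reduces to studying the monotonicity of $g$. I will show that $g$ is strictly decreasing in $k$, with exactly one exception: when $4 \mid n$, $g(n/2-1) = g(n/2)$. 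Once this is established, $\strata(\phi(x)) > \strata(x)$ together with $g(\strata(x)) \le g(\strata(\phi(x)))$ forces the pair of strata to be exactly $(n/2-1, n/2)$ with $4 \mid n$.

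To study $g$, write $F(m) = \fc(\KG(m)) = m H_{\lfloor m/2\rfloor} - p(m)$ using Theorem \ref{thm:frac_chrom}. For small $m$ (namely $m = 2, 3$) I will check the formula directly against the graphs themselves. I will then compute the successive difference
$$g(k+1) - g(k) = \bigl(F(k+2) - F(k+1)\bigr) - \bigl(F(n-k+1) - F(n-k)\bigr),$$
so what matters is the increment $D(m) = F(m+1) - F(m)$. There are two cases by parity. For $m = 2r$ the floor does not change, giving $D(2r) = H_r + 1$. For $m = 2r+1$ the floor increases by one, giving $D(2r+1) = (2r+2)H_{r+1} - 1 - (2r+1)H_r = H_r + 1$. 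So $D(m) = H_{\lfloor m/2 \rfloor} + 1$, which is nondecreasing in $m$ and strictly increases only when passing from an odd $m$ to $m+1$.

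It follows that $g(k+1) - g(k) = H_{\lfloor (k+1)/2\rfloor} - H_{\lfloor (n-k)/2\rfloor}$. For $k+1 \le \lfloor n/2 \rfloor$ we have $k+1 \le n-k-1$, so $k+1 < n-k$. Hence this difference is at most $0$, and it equals $0$ exactly when $\lfloor (k+1)/2\rfloor = \lfloor (n-k)/2\rfloor$. Combined with $k+1 < n-k$, equality of these floors forces $k+1$ to be even and $n-k = k+2$. That means $n = 2k+2$, $k = n/2 - 1$, and $k+1 = n/2$ is even, i.e. $4 \mid n$. In every other case $g$ strictly decreases, so $g$ is strictly decreasing except on that single step. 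This gives the statement.

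The main obstacle is making sure the closed form of Theorem \ref{thm:frac_chrom} is valid for every $m$ that actually occurs, in particular $F(k+1)$ for small $k$. These are $\KG(2)$ (a single vertex, so $\fc = 1$; the formula gives $2\cdot 1 - 1 = 1$) and $\KG(3)$ (a triangle, $\fc = 3$; the formula gives $3$). The theorem is stated with the hypothesis $n > 2k$, so I would verify these edge cases by hand, and if needed treat $k = 1, 2$ separately.
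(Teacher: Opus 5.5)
Your proposal is correct and follows the paper's proof essentially step for step. It uses the star comparison $\fc(\st(x)) \le \fc(\st(\phi(x)))$ via Lemma~\ref{lem:link_structure}, the increment $\fc(\KG(m+1)) - \fc(\KG(m)) = H_{\lfloor m/2\rfloor} + 1$ computed by parity, and the resulting step formula $H_{\lfloor (k+1)/2\rfloor} - H_{\lfloor (n-k)/2\rfloor}$, which vanishes only when $4 \mid n$ and $k = n/2-1$. Your hand check of $\KG(2)$ and $\KG(3)$ is a harmless extra precaution; the paper simply applies the Gaster--Greene--Vlamis formula for $\fc(\KG(m))$ at every $m \ge 2$.
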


\begin{proof}
Let $x\in \KG(n)$ with $\strata(x) = k$. Let $\strata(\phi(x))=\ell$. Assume $\ell>k$. The restriction of $\phi$ to the star of $x$ gives a graph homomorphism
$
\phi|_{\st(x)}\colon\st(x)\to\st(\phi(x)).$
Therefore,
\[
\fc(\st(x))\leq\fc(\st(\phi(x))).
\]
For
$
m \in \{1,\dots,\lfloor\frac{n}{2}\rfloor\}$
define, 
\[
F(m)= \fc(\KG(m+1))+\fc(\KG(n-m+1))-1.
\]
By Lemma \ref{lem:link_structure}, if $y \in \KG(n)$ has $\strata(y) = m$, then $F(m) = \fc(\st(y))$. Thus, $F(k) \leq F(\ell)$.

Our next step is to prove that $F(k) \geq F(\ell)$ as well. To start, we check that $$\fc(\KG(m+1))-\fc(\KG(m)) = \harm{m/2}+1. $$  When $m$ is even, we have $\harm{\frac{m+1}{2}} = \harm{\frac{m}{2}}$. Thus $$\fc(\KG(m+1))-\fc(\KG(m)) = (m+1)\harm{\frac{m+1}{2}} - \left( m\harm{\frac{m}{2}} -1\right) = \harm{\frac{m}{2}} +1 .$$
When $m$ is odd, we have $$ \harm{\frac{m+1}{2}} = \frac{2}{m+1} + \harm{\frac{m-1}{2}} = \frac{2}{m+1} + \harm{\frac{m}{2}}.$$ Thus, 
\begin{align*}
    \fc(\KG(m+1))-\fc(\KG(m)) =& (m+1)\harm{\frac{m+1}{2}} -1  - m\harm{\frac{m}{2}} \\
    =& 2 +(m+1)\harm{\frac{m}{2}} - 1-m\harm{\frac{m}{2}}\\
    =&  \harm{\frac{m}{2}} +1 .\\
\end{align*}

Applying this to $F$ we have 
\begin{align*}
F(m+1) - F(m)
&=
\fc(\KG(m+2)) - \fc(\KG(m+1))+ \fc(\KG(n-m)) -\fc(\KG(n-m+1))\\
&=
H_{\left\lfloor\frac{m+1}{2}\right\rfloor} -H_{\left\lfloor\frac{n-m}{2}\right\rfloor},
\end{align*}
for every $m \in \{1,\dots,\lfloor\frac{n}{2}\rfloor-1\}$.
Since $m<\frac{n}{2}$, we have
$
\left\lfloor\frac{m+1}{2}\right\rfloor \leq \left\lfloor\frac{n-m}{2}\right\rfloor,
$ which implies $F(m+1)\leq F(m)$ 
as the harmonic numbers are strictly increasing.
Since $\ell>k$, it follows that
$
F(k)\geq F(\ell).$
Combining this with $F(k)\leq F(\ell)$ we obtain
$
F(k)=F(\ell).$ 

Now $
F(k)=F(\ell)$ gives

\[
0= F(\ell) - F(k)
=
\sum_{m=k}^{\ell-1}\bigl(F(m+1)-F(m)\bigr)
\]
and every term in this sum is non-positive, so every term must be zero. In particular,
$F(k)$ must equal $F(k+1).$ However $$F(k) - F(k+1) = H_{\left\lfloor\frac{n-k}{2}\right\rfloor}
-
H_{\left\lfloor\frac{k+1}{2}\right\rfloor},$$ and the difference of two harmonic numbers is zero only when the harmonic numbers are equal. Thus, 
$
\left\lfloor\frac{n-k}{2}\right\rfloor
=
\left\lfloor\frac{k+1}{2}\right\rfloor$, and this holds if and only if $4|n$ and   $k=\frac{n}{2}-1 = \strata(x).$ This means that $\ell$ must be $k+1 = \frac{n}{2}$.
\end{proof}

We now prove that $\KG(n)$ is a core. Lemma \ref{lem:strata_do_not_increase} ensures that homomorphisms preserve the first stratum. Corollary \ref{cor:inductive_step} will then inductively show that the higher strata are preserved except for the highest or second to highest stratum in the case of $n$ being even. We handle those two cases  with a variation of  the proof of Proposition \ref{prop:inductive_step}.

\begin{thm}\label{thm:KG_is_core}
    The total Kneser graph $\KG(n)$ is a core for all $n \geq 5$.
\end{thm}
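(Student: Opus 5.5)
The plan is to show that every endomorphism $\phi$ of $\KG(n)$ maps each stratum bijectively onto itself. Then $\phi$ is a bijection on vertices. A vertex-bijective endomorphism of a finite graph is injective on edges, hence bijective on edges by counting, so it is an automorphism. The induction on strata uses Lemma \ref{lem:strata_do_not_increase} to stop strata from increasing and Corollary \ref{cor:inductive_step} to stop them from decreasing. The only extra work is the top stratum, and the second-to-top stratum when $4|n$.

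\emph{Base case.} Let $x\in\KG(n,1)$. Strata are at least $1$, and Lemma \ref{lem:strata_do_not_increase} says $\strata(\phi(x))>1$ would force $1=\frac n2-1$, that is $n=4$. Since $n\ge5$, we get $\phi(\KG(n,1))\subseteq\KG(n,1)$. Because $\KG(n,1)\cong K_n$, the map $\phi$ is injective on it, so $\phi(\KG(n,1))=\KG(n,1)$.

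\emph{Inductive step.} Take $2\le k<n/2$, excluding $k=\frac n2-1$ when $4|n$. Lemma \ref{lem:strata_do_not_increase} gives $\strata(\phi(x))\le \strata(x)$ on $\KG(n,k)$. The moreover clause of Corollary \ref{cor:inductive_step} then gives $\phi(\KG(n,k))=\KG(n,k)$. If $n$ is odd, this covers every stratum and we are done.

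It helps to record that the permutations induced on the lower strata agree. Suppose $\phi$ acts by $\sigma\in\sym(n)$ on $\KG(n,j)$ and maps $\KG(n,j+1)$ into $\KG(n,j+1)$, with $2\le j$. Take $x\in\KG(n,j+1)$. Then $\sigma(L_j(x))=\phi(L_j(x))\subseteq L_j(\phi(x))$. Both sides have the same cardinality, namely twice the number of $j$-subsets of a $(j+1)$-set. So the sets are equal, and Lemma \ref{lem:restricted_link_determine_vertices} gives $\phi(x)=\sigma(x)$. Hence, after composing with $\sigma^{-1}$, we may assume $\phi$ fixes every vertex of the strata $2,\dots,k-1$ already handled.

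\emph{Case $4|n$, stratum $k=\frac n2-1$.} By Proposition \ref{prop:inductive_step} and Lemma \ref{lem:strata_do_not_increase}, the only bad case is some $x=A|A^c$ with $|A|=k$ and $\phi(x)=y=B|B^c$ with $|B|=\frac n2$. I will rule this out with a variant of the proof of Proposition \ref{prop:inductive_step}. Every $(k-1)$-subset $C$ of $A^c$ gives a vertex adjacent to $x$, and $\phi$ fixes it. Hence $C$ must be adjacent to $y$. Since $|C|<|B|$, this means $C\subseteq B$ or $C\subseteq B^c$.

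Now $|A^c|=\frac n2+1$ exceeds $|B|=|B^c|=\frac n2$, so $A^c$ meets both $B$ and $B^c$. Also $k-1=\frac n2-2\ge2$ for $n\ge8$, the smallest relevant case since $4|n$ and $n\ge5$. So we can choose a $(k-1)$-subset $C\subseteq A^c$ meeting both $B$ and $B^c$, which is the desired contradiction. Thus $\phi(\KG(n,k))\subseteq\KG(n,k)$, and since $k<n/2$ the core property gives equality.

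\emph{Top stratum, $n$ even.} For $x\in\KG(n,n/2)$, Proposition \ref{prop:inductive_step} gives $\strata(\phi(x))\ge n/2$, so $\phi$ maps the top stratum into itself. The top stratum is edgeless, so the core theorem does not apply. This is the main obstacle: proving that $\phi$ is injective there.

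To handle it, I will reuse the restricted-link argument with $j=\frac n2-1$, which is at least $2$ since $n\ge6$. As above, $\sigma(L_j(x))=\phi(L_j(x))\subseteq L_j(\phi(x))$, and the two sides have equal size. So Lemma \ref{lem:restricted_link_determine_vertices} forces $\phi(x)=\sigma(x)$. Thus $\phi$ is a bijection on every stratum, and the theorem follows.
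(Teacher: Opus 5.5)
Your proof is correct and follows the paper's own structure: the base case and induction via Lemma \ref{lem:strata_do_not_increase} and Corollary \ref{cor:inductive_step}, a variant of Proposition \ref{prop:inductive_step} for stratum $\frac n2-1$ when $4\mid n$, and Lemma \ref{lem:restricted_link_determine_vertices} for injectivity on the edgeless top stratum. Two small points of comparison: your single choice $C\subseteq A^c$ (possible because $|A^c|=\frac n2+1$ exceeds $|B|=|B^c|$) is a tidy shortcut past the paper's two-case split, and your inclusion-plus-cardinality step justifies the equality $L_j(\phi(x))=\sigma(L_j(x))$ that the paper only asserts; note, though, that $|L_j(x)|=(j+1)+\binom{n-j-1}{j}$ in general, which equals $2(j+1)$ only when $j+1=\frac n2$, although all your argument needs is that this count depends only on the stratum.
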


\begin{proof}
Let $\phi\colon \KG(n) \to \KG(n)$ be a graph homomorphism. It will suffice to prove that $$\phi(\KG(n,k)) = \KG(n,k)$$ for all $k \in \{1,\dots,\lfloor n/2\rfloor\}.$ 

Lemma \ref{lem:strata_do_not_increase} ensures that $\phi(\KG(n,1)) \subseteq \KG(n,1)$ for all $n \geq 5$.  Since $\KG(n,1)$ is a core, this implies $\phi(\KG(n,1)) = \KG(n,1)$. We can now use Lemma \ref{lem:strata_do_not_increase} plus Corollary \ref{cor:inductive_step} to inductively to show that $\phi(\KG(n,k)) = \KG(n,k)$ for all $k \in \{1,\dots, \lfloor n/2 \rfloor-2\}$. We now divide into cases based on $n$.

\emph{Case 1:} Assume $n$ is odd. In this case,  Lemma \ref{lem:strata_do_not_increase} and  Corollary \ref{cor:inductive_step}  show that $\phi(\KG(n,k)) = \KG(n,k)$ for all $k \in \{1,\dots,\lfloor n/2\rfloor\}$  and we are finished.

\emph{Case 2:} Assume $n$ is even, but not divisible by 4. Let $n = 2m$. In this case,  Corollary \ref{cor:inductive_step}  and Lemma \ref{lem:strata_do_not_increase} show that $\phi(\KG(n,k)) = \KG(n,k)$ for all $k \in \{1,\dots,m-1\}$. In particular, if $\phi_k$ is the restriction of $\phi$ to $\KG(n,k)$, then $\phi_k$ is an automorphism of $\KG(n,k)$ for $k < m$. It remains to show that $\phi(\KG(n,m)) = \KG(n,m).$ 

Because $n$ is not divisible by $4$, Corollary \ref{cor:inductive_step} establishes that $\phi(\KG(n,m)) \subseteq \KG(n,m)$. Let $x,y \in \KG(n,m)$ be distinct vertices. Since $\phi_{m-1}$ is an automorphism of $\KG(n,m-1)$,  there is $\sigma \in \sym(n)$ so that $\phi_{m-1} = \sigma$. Moreover,   $$L_{m-1}(\phi(x)) = \sigma(L_{m-1}(x)) \text{ and } L_{m-1}(\phi(y)) = \sigma(L_{m-1}(y))$$ where $L_{m-1}(\cdot)$ is the restricted link from Section \ref{sec:automorphisms}. Since the restricted links determine vertices  (Lemma \ref{lem:restricted_link_determine_vertices}), $x \neq y$ implies $\sigma(L_{m-1}(x)) \neq \sigma(L_{m-1}(y))$. Thus, $L_{m-1}(\phi(x)) \neq L_{m-1}(\phi(y))$. But this implies $\phi(x) \neq \phi(y)$ by Lemma \ref{lem:restricted_link_determine_vertices}. Hence, $\phi_m$ is injective which implies $\phi(\KG(n,m)) = \KG(n,m)$.

\emph{Case 3:} Assume $n$ is divisible by 4. Let $n = 2m$. We first show that $\strata(\phi(x))\leq m-1$ for all $x \in \KG(n,m-1)$.

Since we know $\phi$ preserves the stratum $\KG(n,m-2)$ and $\KG(n,m-2)$ is a core, we can assume that $\phi$ fixes each vertex of $\KG(n,m-2)$ by precomposing with the appropriate permutation.  Let $x \in \KG(n,m-1)$ and suppose for contradiction that  $\phi(x) \in \KG(n,m)$. 

Let $A|A^c = x$ and $B|B^c=\phi(x)$. Without loss of generality, we can assume $|A\cap B| \leq |A\cap B^c|$ by switching $B$ and $B^c$.

    First, suppose that $A\cap B \neq \emptyset$. Then both $A\cap B$ and $A\cap B^c$ are nonempty because  $|A\cap B| \leq |A\cap B^c|$. Since $m \geq 4$, we can choose a subset $C\subseteq A$ with $|C|=m-2$ such that
$ C\cap B\neq\emptyset$
and
$C\cap B^c\neq\emptyset.$ Let $z=C|C^c \in \KG(n,m-2)$.

Since $C\subseteq A$,  $x$ and $z$ are joined by an edge. However,  $C^c\cap B\supseteq A^c\cap B\neq\emptyset$ means that $z = \phi(z)$ and $\phi(x) = B|B^c$ are not joined by an edge as $C\cap B$, $C^c \cap B$, and $C \cap B^c$ are all non-empty. This violation of $\phi$ being a homomorphism means that $\strata(\phi(x)) \neq m$.

Now suppose that  $A\cap B = \emptyset$. Then $A\subseteq B^c$, and $B^c\setminus A$ consists of a single element. Choose a subset $C\subseteq A^c$ with $|C|=m-2$ such that $C$ contains the element of $B^c\setminus A$ and at least one, but not all, of the elements of $B$. Let $
z=C|C^c.$ Since $C \cap B \neq \emptyset$, $C \cap B^c \neq \emptyset$, and $C^c \cap B \neq \emptyset$, $z = \phi(z)$ is not joined by an edge to $\phi(x) = B|B^c$. However, $C \subseteq A^c$ means that $z$ and $x$ are joined by an edge. This contradiction again means that $\strata(\phi(x)) \neq m$.

In all cases, we have shown $\strata(\phi(x)) \leq \strata(x) = m-1$. We can now apply Corollary \ref{cor:inductive_step} to conclude that $ \phi(\KG(n,m-1)) = \KG(n,m-1)$.

The last remaining step is to verify that $\phi(\KG(n,m)) = \KG(n,m)$. Since $\strata(\phi(x)) \leq \strata(x) = m$ is automatically true for $x \in \KG(n,m),$ Corollary \ref{cor:inductive_step} now tells us that $\phi(\KG(n,m)) \subseteq \KG(n,m)$. We can now finish by repeating the argument from the $n$ even, but not divisible by $4$ case to prove that $\phi\vert_{\KG(n,m)}$ is injective.    
\end{proof}

That $\KG'(n)$ is a core now follows from $\KG(n)$ being a core.

\begin{cor}
    The truncated total Kneser graph $\KG'(n)$ is a core for all $n \geq 5$.
\end{cor}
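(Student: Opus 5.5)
Given an endomorphism $\psi$ of $\KG'(n)$, I extend it to an endomorphism $\Phi$ of $\KG(n)$ and then apply Theorem~\ref{thm:KG_is_core}. Since $\KG(n) = \KG(n,1) \ast \KG'(n)$, define $\Phi$ to be the identity on $\KG(n,1)$ and $\psi$ on $\KG'(n)$.

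First I check that $\Phi$ is a homomorphism by sorting the edges of $\KG(n)$ into three types:
\begin{itemize}
\item edges inside $\KG(n,1)$ are preserved because $\Phi$ is the identity there;
\item edges inside $\KG'(n)$ are preserved because $\psi$ is a homomorphism;
\item an edge between $u \in \KG(n,1)$ and $v \in \KG'(n)$ goes to the pair $u, \psi(v)$, where $\psi(v) \in \KG'(n)$ lies in the other join factor, so this pair is adjacent because every vertex of $\KG(n,1)$ is adjacent to every vertex of $\KG'(n)$.
\end{itemize}
The only subtle point is that $\psi$ takes values in $\KG'(n)$, so no vertex of $\KG'(n)$ is sent into $\KG(n,1)$. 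This holds by definition, since $\psi$ is an endomorphism of $\KG'(n)$.

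By Theorem~\ref{thm:KG_is_core}, for $n\ge 5$ the map $\Phi$ is an automorphism of $\KG(n)$. In particular $\Phi$ is injective on vertices, so $\psi$ is injective. Because $\KG'(n)$ is finite, $\psi$ is a bijection on vertices.

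It remains to show that $\psi$ is bijective on edges, i.e.\ that $\psi^{-1}$ preserves edges. Since $\Phi$ is an automorphism, non-edges of $\KG(n)$ map to non-edges, and therefore $\Phi^{-1}$ preserves edges. The restriction of $\Phi^{-1}$ to $\KG'(n)$ is $\psi^{-1}$, so $\psi^{-1}$ preserves edges. Alternatively, $\psi$ is an injective map on the finite edge set of $\KG'(n)$ and hence surjective onto it.

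This is essentially routine, and I don't expect a real obstacle. The one step requiring care is confirming that the extension $\Phi$ respects the join edges, which reduces to $\psi(\KG'(n))\subseteq \KG'(n)$.
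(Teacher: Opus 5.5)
Your proposal is correct and is the paper's argument: extend the endomorphism by the identity on $\KG(n,1)$, use the join structure $\KG(n)=\KG(n,1)\ast\KG'(n)$ to see that the extension is a homomorphism, and apply Theorem~\ref{thm:KG_is_core}. Your extra check that the restriction is bijective on vertices and edges just spells out a step the paper leaves implicit.
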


\begin{proof}
    Let $\phi\colon \KG'(n) \to \KG'(n)$ be a graph homomorphism. Define $\psi \colon \KG(n) \to \KG(n)$ to be the map that is $\phi$ on the vertices of $\KG'(n)$ and the identity on $\KG(n,1)$. Since $\KG(n)$ is a join of $\KG'(n)$ and $\KG(n,1)$, $\psi$ is a graph homomorphism. By Theorem \ref{thm:KG_is_core}, $\psi$ is an automorphism of $\KG(n)$. Thus $\phi = \psi\vert_{\KG'(n)}$ is an automorphism of $\KG'(n)$.
\end{proof}

Theorem \ref{thm:KG_is_core} established that $\KG(n)$ is a core  for $n \geq 5$, which is what we need for our topological application. The remaining cases can be verified individually. For $n=1,2,3$, $\KG(n)$ is a core  because it is either empty, a single vertex, or a $3$-cycle. In contrast, $\KG(4)$ is \emph{not} a core. One can construct many homomorphisms that are not bijective, because $\KG(4)$ is the join of the  $\KG(4,1)$ (which is a complete graph) and $\KG(4,2)$ (which is 3 vertices with no edges). For example, the map that sends all vertices of $\KG(4,2)$ to $1|234$ while sending  $1|234$ to $12|34$ and fixing the rest of $\KG(4,1)$ is a non-injective homomorphism.

\section{Kneser subgraphs of curve graphs of spheres}

Let  $\Sigma_n$ be a sphere with $n$ points removed. We call the removed points \emph{punctures}. A \emph{curve} on $\Sigma_n$ is an isotopy class of a continuous embedding of a circle into $\Sigma_n$. A curve is \emph{essential} if it is not isotopic to a point or a puncture. We will conflate a curve with its image on $\Sigma_n$. The \emph{curve graph} $\C(\Sigma_n)$ is the graph whose vertices are essential curves on $\Sigma_n$ with edges between two curves if they have disjoint representatives.

Label the punctures of $\Sigma_n$ with $1,\dots, n$. Every curve $\gamma$ on $\Sigma_n$ cuts $\Sigma_n$ into two components (up to isotopy) that each contain at least two punctures. The labels of the punctures in each component of $\Sigma_n \setminus \gamma$ are invariant under isotopies of $\gamma$. Thus $\gamma$ determines a well defined split $A_\gamma|A_\gamma^c$ of $[n]$ where $A_\gamma$ and $A_\gamma^c$ are the labels of the punctures on each of the two components of $\Sigma_n \setminus \gamma$. The map $\Phi \colon \C(\Sigma_n) \to \KG'(n)$  by $\Phi(\gamma) = A_\gamma | A_\gamma^c$ is a surjective graph homomorphism because  if two distinct curves $\gamma$ and $\eta$ are disjoint on $\Sigma_n$, then $A_\gamma$ must properly nest into $A_\eta$ or $A_\eta^c$.

We use the fact that $\KG'(n)$ is a core to verify that \cite[Proposition 3.4]{GGV_Coloring_curves} generalizes to all $n \geq 5$.

\begin{cor}\label{cor:}
    Let $n \geq 5$. There is no subgraph of $\C(\Sigma_n)$ that is isomorphic to $\KG'(n)$. 
\end{cor}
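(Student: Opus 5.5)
Suppose for contradiction that $Y \subseteq \C(\Sigma_n)$ is a subgraph with an isomorphism $\iota \colon \KG'(n) \to Y$. The plan is to compose $\iota$ with the homomorphism $\Phi \colon \C(\Sigma_n) \to \KG'(n)$, which sends a curve to its split of the punctures. The composite $\Phi \circ \iota \colon \KG'(n) \to \KG'(n)$ is a graph homomorphism, because $\iota$ maps edges of $\KG'(n)$ to edges of $Y$, hence to edges of $\C(\Sigma_n)$, and $\Phi$ preserves edges. By the preceding corollary, $\KG'(n)$ is a core for $n \geq 5$, so $\Phi\circ\iota$ is an automorphism. In particular it is a bijection, and it also maps non-edges to non-edges.

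Next I would use this bijectivity to pin down the curves in $Y$. After composing with the inverse automorphism, we may assume $\Phi\circ\iota$ is the identity. Then for every split $A|A^c$ with $2 \leq |A| \leq n-2$, the curve $\gamma_A = \iota(A|A^c)$ is a curve whose induced split is $A|A^c$. Moreover, whenever two splits are adjacent in $\KG'(n)$, the corresponding curves in $Y$ must be disjoint.

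The contradiction should come from a topological fact. Disjoint essential curves determine splits that are nested or disjoint, and each stratum forces a rigid configuration. I would look for a small family of pairwise adjacent, or cleverly adjacent, vertices of $\KG'(n)$ whose curves cannot all be realized simultaneously as pairwise disjoint curves on $\Sigma_n$. The first candidate is a family exploiting the fact that a maximal collection of pairwise disjoint essential curves on $\Sigma_n$ has exactly $n-3$ curves. One then needs a family of splits of size greater than $n-3$ that forms a clique in $\KG'(n)$. This fails directly, however, since cliques of compatible splits also have size at most $n-3$.

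The main obstacle is therefore finding the real topological obstruction. My first attempt would follow the argument of \cite[Proposition 3.4]{GGV_Coloring_curves} for $n=5$. Fix a curve $\gamma$ with split $\{1,2\}|\{3,\dots,n\}$. Every vertex adjacent to it in $\KG'(n)$ must be realized by a curve disjoint from $\gamma$, so all of them live in the complement of $\gamma$. Using the curves for $\{1,2,i\}$ and $\{1,2,j\}$ (both nested over $\{1,2\}$) together with the curve for $\{i,j\}$, which is disjoint from $\{1,2\}$, one tries to force a configuration in which two curves must intersect even though their splits are adjacent. Alternatively, the automorphism might be forced to map a non-edge onto a pair of curves that are in fact disjoint; this would contradict the fact that $\Phi\circ\iota$ reflects non-edges as an automorphism. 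The key step is to exhibit, inside the punctured disk bounded by $\gamma$ together with neighboring curves, a forced pair of curves that are disjoint while their splits overlap without nesting, which is impossible under $\Phi$. I expect the $n=5$ argument to generalize by restricting to $5$ punctures in the appropriate subsurface.
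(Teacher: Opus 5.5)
There is a genuine gap. Your reduction matches the paper's: compose with $\Phi$, use that $\KG'(n)$ is a core to make $\Phi\circ\iota$ an automorphism, and normalize it to the identity. But the proposal stops there. The topological contradiction, which is the actual content of the corollary, is never produced, and the leads you list do not supply it. The splits $\{1,2,i\}$, $\{1,2,j\}$ and $\{i,j\}$ are pairwise non-adjacent in $\KG'(n)$: each pair meets without nesting, and for $n\geq 5$ their union is not all of $[n]$. So the identity normalization forces only that their curves avoid $\gamma$; it forces nothing about how they meet each other. Likewise, ``a non-edge sent to disjoint curves'' is only a contradiction once you exhibit a configuration that forces such disjointness, and you do not.

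The missing idea is to use only the $2$-stratum together with a planarity argument. Fill in the punctures to get points on $S^2$. For each pair $\{i,j\}$, the curve $\iota(\{i,j\}|\{i,j\}^c)$ has split $\{i,j\}|\{i,j\}^c$, so it bounds a disk containing exactly the points $i$ and $j$. Draw an arc from $i$ to $j$ inside that disk.

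Now take $\{i,j\}\cap\{k,l\}=\emptyset$. The two splits are adjacent in $\KG(n,2)$, so their curves are disjoint. The two disks must then be disjoint. Otherwise one curve would sit inside the other's disk and cut off a disk containing the same two points, giving it the wrong split. Hence the two arcs are disjoint.

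This gives a drawing of $K_n$ on the sphere in which independent edges never cross. By the Hanani--Tutte theorem, $K_n$ would then be planar, contradicting $n\geq 5$. This is also the precise sense in which your hunch that ``five punctures suffice'' is right: pairs from any five labels already give such a drawing of $K_5$. No subsurface restriction is needed.
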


\begin{proof}
    Assume for the purposes of contradiction that there is an injective graph homomorphism $\iota\colon \KG'(n) \to \C(\Sigma_n)$. Then $\Phi \circ \iota$ is an endomorphism of $\KG'(n)$. Since $\KG'(n)$ is a core,  $\Phi \circ \iota$ must be an automorphism. We can moreover assume that $\Phi \circ \iota$ is the identity without loss of generality. Thus, if $x \in \KG(n,k)$, then $\iota(x) \in \Phi^{-1}(\KG(n,k))$ for all $k \in \{1,\dots, \lfloor n/2 \rfloor \}$. In particular, if $A|A^c$ is a vertex of $\KG(n,2)$ with $|A| = 2$, then $\iota(A|A^c)$ is a curve on $\Sigma_n$ that splits the punctures labeled with the elements of $A$ from those labeled by $A^c$. We can therefore draw a copy of the complete graph $K_n$ on a sphere by connecting pairs of the punctured points by an arc contained in disks bounded by these curves. This would be a drawing of $K_n$ where the edge between any two disjoint pairs of vertices does not intersect (because the curves used to draw the arcs are disjoint). By the Hanani--Tutte theorem, this would contradict the non-planarity of $K_n$ for $n\geq 5$ \cite{Hanani_Planar,Tutte_planar}. Hence, there is no injective homomorphism $\KG'(n) \to \C(\Sigma_n).$
\end{proof}

\bibliographystyle{amsalpha}

\bibliography{kneser_bib}

\end{document}